\newtheorem{theorem}{Theorem}[section]
\newtheorem{definition}[theorem]{Definition}
\newtheorem{corollary}[theorem]{Corollary}
\newtheorem{proposition}[theorem]{Proposition}
\newtheorem{lemma}[theorem]{Lemma}
\numberwithin{equation}{section}
\journal{European Journal of Combinatorics}
\begin{document}

\begin{frontmatter}

\title{Enumerating path diagrams in connection with $q$-tangent and $q$-secant numbers}
\author{Anum Khalid}
\author{Thomas Prellberg}
\address{School of Mathematical Sciences, Queen Mary University of London, Mile End Road, London E1 4NS, United Kingdom}
\ead{t.prellberg@qmul.ac.uk}
\begin{abstract}
We enumerate height-restricted path diagrams associated with $q$-tangent and $q$-secant numbers by considering convergents of continued fractions, leading to expressions involving basic hypergeometric functions. Our work generalises some results by M. Josuat-Verg\'es for unrestricted path diagrams [European Journal of Combinatorics {\bf 31} (2010) 1892].
\end{abstract}

\begin{keyword}
path diagrams, continued fractions, q-tangent numbers and q-secant numbers\\
05A15\sep 05A30
\end{keyword}
\end{frontmatter}
\section{Introduction and Statement of Results}
Much work has been done on the enumeration of non-crossing directed lattice paths in both the mathematics and the physics communities, see e.g.~\cite{walkers,dwalks}. The work here takes into account two paths given by a path diagram, i.e.~a Dyck path and a general directed path restrained to lie between the $x$-axis and this Dyck path. In particular, we shall consider Dyck paths restricted by height.

A Dyck path is a lattice path on $\mathbb{N}^2$ from $(0,0)$ to $(2n,0)$ consisting of $n$ steps in the northeast direction of the form $(1,1)$ and $n$ steps in the southeast direction of the form $(1,-1)$ such that the path never goes below the line $y=0$.  
We encode a Dyck path in terms of labelled steps where each step is indexed with the height of the point from where it starts. For example, the labelled path shown in Figure \ref{fig:dp_0} is encoded as $(a_0,b_1,a_0,a_1,a_2,b_3,b_2,a_1,b_2,b_1)$, where $a_i$ is a northeast step starting at height $i$ and $b_j$ is a southeast step starting at height $j$. So we can say that there is a set $X=\{a_0,a_1,a_2, \ldots\} \cup \{b_1,b_2,b_3,\ldots\}$, the elements of which, as an ordered finite sequence, are associated with a Dyck path. We consider path diagrams \cite{Flajolet2006992} which are represented by a Dyck path and the set of points under it subjected to some conditions expressed using the above encoding.
\begin{definition}[\cite{Flajolet2006992}]{\textit{Path Diagrams.}}
A system of path diagrams is defined by a possibility function
$$pos:X\rightarrow \mathbb{N}_0.$$
Path diagrams are  composed of the Dyck path $u=u_1 u_2 u_3 \ldots u_n$  where for $j=1,2,\ldots,n$ each $u_j \in X$, and the corresponding sequence of integers $s=s_1 s_2 s_3 \ldots s_n$ where for $i=1,2,\ldots,n$ each $0 \leq s_i \leq pos(u_j)$. We get $n$ points corresponding to a path of length $n$.
\end{definition} 
We consider two types of path diagrams. In the first case we consider all possible lattice points bounded by the $x$-axis and a Dyck path by using the possibility function
\begin{equation}
pos(a_j)=j, \quad pos(b_k)=k, \quad\text{for}\quad  j \geq0 \quad\text{and}\quad k \geq1.
\label{tangentfunc}
\end{equation}  
In the second case we restrict this set of points by excluding the points which are in contact with the Dyck path at a southeast step, leading to
\begin{equation}
pos(a_j)=j, \quad pos(b_k)=k-1, \quad\text{for}\quad  j \geq0 \quad\text{and}\quad k \geq1.
\label{secantfunc}
\end{equation}
These two possibility functions map labelled steps onto a set of integers. These integers can be visualised as column heights, and a path is then formed by joining the peaks of the columns. 

\begin{figure}[h!]
\begin{center}\includegraphics[width=.9\textwidth]{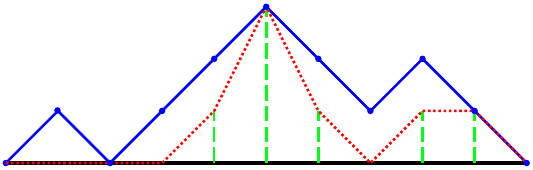}\end{center}
\caption{A Dyck path of half length $N=5$ (solid blue line), together with columns of heights formed by the integers $(0,0,0,0,1,3,1,0,1,1)$ (dashed green lines). The dotted red line combined with the Dyck path represents the path diagram.}
\label{fig:dp_0}
\end{figure}

Figure \ref{fig:dp_0} shows an example of one such path diagram given the Dyck path example used above. Columns of heights are formed by the sequence of integers $(0,0,0,0,1,3,1,0,1,1)$, with the associated path shown as a dotted line. When restricting the height of the Dyck path, we can interpret this as a model of two non crossing paths in a finite slit.  

Let $a^{(w)}_{N,m}$ be the number of path diagrams defined by the possibility function \eqref{tangentfunc} and $b^{(w)}_{N,m}$ be the number of path diagrams formed by the possibility function \eqref{secantfunc}, bounded by a Dyck path of length $2N$ in a slit of width $w$. Then we define the associated generating functions
\begin{equation}
G_w(t,q)=\sum_{N,m=0}^{\infty}a^{(w)}_{N,m} t^{2N} q^m
\label{1}
\end{equation}
and
\begin{equation}
G'_w(t,q)=\sum_{N,m=0}^{\infty}b^{(w)}_{N,m} t^{2N} q^m,
\label{2}
\end{equation}
with the variable $q$ conjugate to the sum of column heights $m$ and the variable $t$ conjugate to the length $2N$ of the Dyck path.

To state our results, we define
\begin{equation}
\phi(\lambda,x)=\sum_{k=0}^{\infty}\frac{(i\lambda;q)_k(-i\lambda;q)_k x^k}{(\lambda^2q;q)_k(q;q)_k}=\,_2\phi_1(i\lambda,-i\lambda;\lambda^2 q;q,x)
\end{equation}
and
\begin{equation}
\psi(\lambda,x)=\sum_{k=0}^{\infty}\frac{(i\lambda \sqrt{q};q)_k(-i\lambda\sqrt{q};q)_k x^k}{(\lambda^2q;q)_k(q;q)_k}=\,_2\phi_1(i\lambda\sqrt{q},-i\lambda\sqrt{q};\lambda^2 q;q,x),
\label{psi}
\end{equation}
where
$_2\phi_1(a,b;c;q,x)=\sum_{k=0}^{\infty}\frac{(a;q)_k(b;q)_k\,x^k}{(c;q)_k (q;q)_k}$
is a basic hypergeometric function.
Here, $(a;q)_n=\prod\limits_{k=0}^{n-1}(1-aq^k)$ is the standard notation for the $q$-Pochhammer symbol.

For the path diagrams defined via (\ref{tangentfunc}), we obtain the following theorem.
\begin{theorem}
For $w \geq 0$,
\begin{equation}
G_w(t,q)=\cfrac{1}{1-\dfrac{\lambda^2(1-q)\left[\bar{\lambda}^w\phi(\lambda,q^{3})\phi\left(\bar{\lambda},q^{w+3}\right)-\lambda^w\phi\left(\bar{\lambda},q^{3}\right)\phi(\lambda,q^{w+3})\right]}{{(1+\lambda^2)\left[\bar{\lambda}^w\phi(\lambda,q^{2})\phi\left(\bar{\lambda},q^{w+3}\right)
-\lambda^{w+2}\phi\left(\bar{\lambda},q^{2}\right)\phi(\lambda,q^{w+3})\right]}}}\,,
\label{full solution}
\end{equation}
where $\lambda$ is a root of $\lambda^2-\lambda(1-q)/t+1=0$ and $\bar\lambda=1/\lambda$.
\label{theorem3}
\end{theorem}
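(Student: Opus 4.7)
The plan is to start from Flajolet's combinatorial theory of continued fractions~\cite{Flajolet2006992} applied to the path diagrams. Summing over all admissible column heights under each step replaces $a_j$ by the weight $[j+1]_q=(1-q^{j+1})/(1-q)$ and $b_k$ by $[k+1]_q$. Consequently the level-$n$ weight of the $J$-fraction is $\lambda_n=[n]_q[n+1]_q$, and the restriction $y\le w$ simply truncates the continued fraction at depth $w$:
\begin{equation*}
G_w(t,q)=\cfrac{1}{1-\cfrac{[1]_q[2]_q\, t^2}{1-\cfrac{[2]_q[3]_q\, t^2}{\ddots 1-\cfrac{[w]_q[w+1]_q\, t^2}{1}}}}\,.
\end{equation*}

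Next, I would introduce for $0\le k\le w$ the auxiliary generating functions $G_w^{(k)}(t,q)$ counting path diagrams in the band $k\le y\le w$, so that $G_w=G_w^{(0)}$ and $G_w^{(k)}=1/\bigl(1-[k+1]_q[k+2]_q\,t^2\,G_w^{(k+1)}\bigr)$ with $G_w^{(w)}=1$. Writing $G_w^{(k)}=V_{k+1}/V_k$ makes $V_k$ a solution of the linear three-term recurrence
\begin{equation*}
V_k=V_{k+1}-[k+1]_q[k+2]_q\, t^2\, V_{k+2}
\end{equation*}
with the two boundary conditions $V_w=V_{w+1}=1$ at the top of the slit, and $G_w=V_1/V_0$ at the bottom. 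A quick sanity check gives $G_0=1$ and $G_1=1/(1-(1+q)t^2)$, as expected.

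The central step is to produce two linearly independent closed-form solutions of this recurrence. Using the substitution $\lambda+\bar\lambda=(1-q)/t$ with $\lambda\bar\lambda=1$, so that $\lambda$ is a root of $\lambda^2-(1-q)\lambda/t+1=0$, I would try an ansatz of the form $\lambda^k\,\phi(\lambda,q^{k+c})$ and $\bar\lambda^k\,\phi(\bar\lambda,q^{k+c})$ for a shift $c$ to be fixed. Substituting into the recurrence, and using $[k+1]_q[k+2]_q=(1-q^{k+1})(1-q^{k+2})/(1-q)^2$ together with the identity $(a;q)_{k+1}=(1-aq^k)(a;q)_k$ applied to the defining series of $\phi$, the verification reduces to a $q$-contiguous relation for $_2\phi_1$; comparing coefficients of $x^k$ determines the correct shift $c$ and makes the identity term-by-term transparent. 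The choice $c=\lambda^2q$ for the $_2\phi_1$-parameter is precisely what aligns the Pochhammer factors, which explains the appearance of $\phi$ rather than a generic hypergeometric.

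With two fundamental solutions in hand, the general solution is their linear combination; imposing $V_w=V_{w+1}=1$ fixes the two coefficients, and evaluating $V_0$ and $V_1$ produces the numerator and denominator of $G_w=V_1/V_0$. Rewriting in the form $1/(1-N/D)$ and simplifying yields \eqref{full solution}; the three different shifts $q^2$, $q^3$, and $q^{w+3}$ appearing in the various $\phi$-factors come respectively from $V_0$, $V_1$, and the boundary data at height $w$, while the factors $\lambda^w$, $\bar\lambda^w$, $\lambda^{w+2}$ arise from the two linearly independent solutions evaluated at $k=w,w+1$. I expect the main obstacle to lie in step three: pinpointing the correct shift $c$ and establishing the underlying $q$-contiguous relation cleanly, since even minor miscounts of $q$-shifts would propagate through all the subsequent algebra. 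Once that identity is settled, the remaining work is linear algebra and careful bookkeeping of $q$-Pochhammer symbols.
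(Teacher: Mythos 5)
Your combinatorial setup is sound and agrees with the paper's Proposition \ref{Prop21}: the truncated Stieltjes fraction with partial numerators $[n]_q[n+1]_q\,t^2=\alpha^2(1-q^n)(1-q^{n+1})$ is exactly \eqref{general continued fraction}, and linearising it via a three-term recurrence is also the route taken there (although the paper runs the recurrence in the width $w$, i.e.\ the convergent recurrences \eqref{recursion P}--\eqref{recursion Q} with data at $w=-1,0$, rather than in the height $k$ with data at the top of the slit). The genuine gap is in your central step. The bare ansatz $V_k=\lambda^k\phi(\lambda,q^{k+c})$ cannot satisfy $V_k=V_{k+1}-\alpha^2(1-q^{k+1})(1-q^{k+2})V_{k+2}$ for any shift $c$: first, the indicial (constant-coefficient) part forces the geometric base to be a root of $\alpha^2\mu^2-\mu+1=0$, i.e.\ $\mu=\lambda/\alpha$ rather than $\lambda$; second, and more seriously, the coefficient $(1-q^{k+1})(1-q^{k+2})$ contains a $q^{2k}$ term, so writing $V_k=\mu^k\sum_j d_jq^{jk}$ and comparing powers of $q^{jk}$ yields a \emph{second-order} recurrence in $j$ (coupling $d_j$, $d_{j-1}$ and $d_{j-2}$), which is incompatible with the first-order ratio $d_j/d_{j-1}$ that characterises $_2\phi_1$ coefficients. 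No choice of $c$ or of the $_2\phi_1$ parameters repairs this, so the ``$q$-contiguous relation'' you hope to verify term by term will not close. The missing idea is a preliminary rescaling that absorbs the offending product into a $q$-Pochhammer prefactor: in your indexing $V_k=\alpha^{-k}(q;q)_k^{-1}W_k$ (the paper's version is \eqref{ansatz1}, $R(w)=\alpha^w(q;q)_{w+1}S(w)$), after which the recurrence takes the form \eqref{scaled recurrence} with only a first power of $q^{k}$ in the inhomogeneity, the coefficient recurrence becomes first order, and iteration produces exactly the series defining $\phi$.

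A secondary issue: even with the correct fundamental solutions, imposing the two boundary conditions at $k=w$ and $k=w+1$ makes the coefficients $A,B$ involve $\phi(\cdot,q^{w+c})$ \emph{and} $\phi(\cdot,q^{w+c+1})$, so $V_1/V_0$ would contain four distinct $\phi$-arguments, whereas \eqref{full solution} contains only three ($q^2$, $q^3$, $q^{w+3}$); you would need an additional contiguity identity (essentially the recurrence itself) to eliminate one of them. The paper avoids this by solving \eqref{recursion P}--\eqref{recursion Q} in $w$: the two initial conditions sit at $w=-1,0$ and supply the arguments $q^2,q^3$, while the single $w$-dependent factor supplies $q^{w+3}$.
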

Taking the limit $w\to\infty$ we obtain the generating function $G(t,q)$ for unrestricted path diagrams.
\begin{corollary}\label{cort1}
The generating function of $q$-tangent numbers is 
\begin{equation}
G(t,q)=\dfrac{(1+\lambda)^2\left[1-(1+\lambda^2) \sum\limits_{k=0}^{\infty}\dfrac{(-i\lambda)^k}{(1-i\lambda q^{k})} \right]}{\lambda^2 (1-q)}\,,
\label{halfplanefortangent}
\end{equation}
where $\lambda$ is the root of $\lambda^2-\lambda(1-q)/t+1=0$ with smallest modulus.
\end{corollary}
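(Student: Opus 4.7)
The strategy is to pass to the limit $w\to\infty$ in Theorem~\ref{theorem3} and then reduce the resulting ratio of basic-hypergeometric series to the closed form~(\ref{halfplanefortangent}).

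\emph{Step 1 (the limit).}
Since the quadratic $\lambda^2-\lambda(1-q)/t+1=0$ has product of roots equal to one, we have $\bar\lambda=1/\lambda$; choosing $\lambda$ to be the root of smallest modulus gives $|\lambda|<1<|\bar\lambda|$, so $\lambda^w\to 0$ and $\bar\lambda^w\to\infty$ as $w\to\infty$. From the series definition of $\phi$, for any admissible $\mu$ and $|q|<1$ one has $\phi(\mu,q^{w+3})=1+O(q^{w+3})\to 1$. Dividing numerator and denominator of the inner fraction in~(\ref{full solution}) by $\bar\lambda^w$, and using $\lambda^w/\bar\lambda^w=\lambda^{2w}\to 0$, the expression collapses to
\[
G(t,q)=\cfrac{(1+\lambda^2)\,\phi(\lambda,q^2)}{(1+\lambda^2)\,\phi(\lambda,q^2)-\lambda^2(1-q)\,\phi(\lambda,q^3)}.
\]

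\emph{Step 2 (Lambert-type reformulation).}
To transform this into the form~(\ref{halfplanefortangent}), I would apply one of Heine's transformations, e.g.\
\[
{}_2\phi_1(a,b;c;q,z)=\frac{(b,\,az;q)_\infty}{(c,\,z;q)_\infty}\,{}_2\phi_1(c/b,\,z;\,az;\,q,\,b),
\]
to $\phi(\lambda,q^s)={}_2\phi_1(i\lambda,-i\lambda;\lambda^2 q;q,q^s)$ for $s=2,3$. The transformed series has argument $b=-i\lambda$ and coefficients rational in $q^k$ with simple poles at $1-i\lambda q^j$. A partial-fraction decomposition in the variable $q^k$ then reduces these to combinations of $1/(1-i\lambda q^k)$, expressing both $\phi(\lambda,q^2)$ and $\phi(\lambda,q^3)$ in terms of the single Lambert sum $S:=\sum_{k\ge 0}(-i\lambda)^k/(1-i\lambda q^k)$ plus elementary rational tails.

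\emph{Step 3 (assembly).}
The specific linear combination $(1+\lambda^2)\phi(\lambda,q^2)-\lambda^2(1-q)\phi(\lambda,q^3)$ in the denominator is engineered so that most Pochhammer prefactors cancel and the rational tails telescope, leaving a factor proportional to $1-(1+\lambda^2)S$. Combining with the numerator $(1+\lambda^2)\phi(\lambda,q^2)$ and using the quadratic relation $\lambda(1-q)=t(1+\lambda^2)$ to eliminate $t$ in favour of $\lambda$, one recovers the right-hand side of~(\ref{halfplanefortangent}).

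The main obstacle is Step~2: identifying precisely which sequence of Heine transformations and partial-fraction identities is needed so that the quotient $\phi(\lambda,q^3)/\phi(\lambda,q^2)$ collapses cleanly to a rational function of the Lambert sum $S$, with all telescoping contributions exactly matching the prefactor $1/[\lambda^2(1-q)]$. An alternative, less illuminating route is to verify the identity as formal power series in $t$ term by term, using the fact that both sides must agree with the classical continued-fraction expansion of the $q$-tangent numbers.
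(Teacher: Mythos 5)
Your plan follows the paper's own proof essentially step for step: pass to the limit $w\to\infty$ using $|\lambda|<1$ and $\phi(\cdot,q^{w+3})\to 1$ to obtain the ratio in~(\ref{HP}), then apply exactly the Heine transformation~(\ref{Hein}) to $\phi(\lambda,q^2)$ and $\phi(\lambda,q^3)$, and finish by partial-fraction decomposition in $q^k$ to reduce everything to the Lambert sum $\sum_k(-i\lambda)^k/(1-i\lambda q^k)$. The computational details you flag as the ``main obstacle'' in Step~2 are precisely the manipulations carried out in~(\ref{fihein1})--(\ref{p2}), so the proposal is correct and not materially different from the paper's argument.
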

Extracting coefficients of this generating function, we can derive a result equivalent to one obtained previously by different methods \cite[Theorem 1.4]{JosuatVerges20101892}.
\begin{corollary}\label{cor36}
\begin{equation}
[t^{2N}]G(t,q)=\dfrac{1}{(1-q)^{2N+1}}\sum\limits_{m=0}^{N}\dfrac{q^{m^2+2m}\left(
\sum\limits_{l=-m}^{m+1}(-1)^l q^{-l^2+2l}\right)
(2m+2)\binom{2N+1}{N+m+1}}{N+m+2}
\label{tantcoeff}
\end{equation}
\end{corollary}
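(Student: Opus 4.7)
The plan is to apply a Lagrange-type inversion to convert $[t^{2N}]$ into a coefficient extraction in $\lambda$, expand $G(\lambda)$ as a power series, and then rearrange to match \eqref{tantcoeff} via a single binomial identity. From $\lambda^2-\lambda(1-q)/t+1=0$ one obtains $t=(1-q)\lambda/(1+\lambda^2)$ and $dt/d\lambda=(1-q)(1-\lambda^2)/(1+\lambda^2)^2$, so changing variables in the residue integral $[t^{2N}]G=\mathrm{Res}_{t=0}G(t)/t^{2N+1}$ yields
\begin{equation*}
[t^{2N}]G(t,q)=\frac{1}{(1-q)^{2N}}[\lambda^{2N}](1-\lambda^2)(1+\lambda^2)^{2N-1}G(\lambda),
\end{equation*}
where $G(\lambda)$ denotes the right-hand side of \eqref{halfplanefortangent} viewed as a formal power series in $\lambda$.

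Next I would expand the inner sum in \eqref{halfplanefortangent}. Using $(1-i\lambda q^k)^{-1}=\sum_{j\ge 0}(i\lambda q^k)^j$ and re-summing by $n=k+j$ gives
\begin{equation*}
\sum_{k\ge 0}\frac{(-i\lambda)^k}{1-i\lambda q^k}=\sum_{n\ge 0}(i\lambda)^n S_n,\qquad S_n:=\sum_{k=0}^{n}(-1)^k q^{k(n-k)}.
\end{equation*}
The involution $k\mapsto 2m+1-k$ flips $(-1)^k$ but fixes $q^{k(2m+1-k)}$, hence $S_{2m+1}=0$; multiplying by $1+\lambda^2$ telescopes the sum to
\begin{equation*}
1-(1+\lambda^2)\sum_{n\ge 0}(i\lambda)^n S_n=\sum_{m\ge 0}(-1)^m\lambda^{2m+2}(S_{2m+2}-S_{2m}).
\end{equation*}
Substituting into \eqref{halfplanefortangent} gives $G(\lambda)=\tfrac{(1+\lambda)^2}{1-q}\sum_{m\ge 0}(-1)^m\lambda^{2m}(S_{2m+2}-S_{2m})$. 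Writing $(1+\lambda)^2=(1+\lambda^2)+2\lambda$, the $2\lambda$ piece contributes only to odd powers of $\lambda$ and drops out of $[\lambda^{2N}]$, so only $(1-\lambda^2)(1+\lambda^2)^{2N}$ matters, whose coefficient of $\lambda^{2N-2m}$ is $\binom{2N}{N-m}-\binom{2N}{N-m-1}=\tfrac{2m+1}{2N+1}\binom{2N+1}{N+m+1}$. This yields
\begin{equation*}
[t^{2N}]G=\frac{1}{(1-q)^{2N+1}(2N+1)}\sum_{m=0}^{N}(-1)^m(S_{2m+2}-S_{2m})(2m+1)\binom{2N+1}{N+m+1}.
\end{equation*}

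For the final identification with \eqref{tantcoeff}, completing the square $k(2m+2-k)=(m+1)^2-(k-m-1)^2$ and substituting $l=k-m$ in $\sum_{k=0}^{2m+1}(-1)^k q^{k(2m+2-k)}=S_{2m+2}-1$ gives the key identity
\begin{equation*}
(-1)^m(S_{2m+2}-1)=q^{m^2+2m}\sum_{l=-m}^{m+1}(-1)^l q^{-l^2+2l}=:A_m.
\end{equation*}
Hence $(-1)^m(S_{2m+2}-S_{2m})=A_m+A_{m-1}$ (with $A_{-1}=0$). Splitting my sum into $A_m$ and $A_{m-1}$ pieces and reindexing the latter by $m\mapsto m+1$ produces a single sum whose coefficient of $A_m$ is $(2m+1)\binom{2N+1}{N+m+1}+(2m+3)\binom{2N+1}{N+m+2}$. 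Using $\binom{2N+1}{N+m+2}=\binom{2N+1}{N+m+1}(N-m)/(N+m+2)$, this combined coefficient simplifies to $(2N+1)(2m+2)\binom{2N+1}{N+m+1}/(N+m+2)$; the factor $1/(2N+1)$ cancels and \eqref{tantcoeff} follows. The main obstacle is keeping signs and index shifts consistent in the passage from the first-difference $S_{2m+2}-S_{2m}$ to the theta-like partial sums $A_m+A_{m-1}$, but once the correct change of variable $l=k-m$ has been identified, everything reduces to the elementary binomial identity above.
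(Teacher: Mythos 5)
Your proposal is correct — I verified the diagonal re-summation, the vanishing of $S_{2m+1}$, the telescoping, the binomial coefficient $\binom{2N}{N-m}-\binom{2N}{N-m-1}=\tfrac{2m+1}{2N+1}\binom{2N+1}{N+m+1}$, the identification $(-1)^m(S_{2m+2}-1)=A_m$, and the final recombination $(2m+1)\binom{2N+1}{N+m+1}+(2m+3)\binom{2N+1}{N+m+2}=\tfrac{(2N+1)(2m+2)}{N+m+2}\binom{2N+1}{N+m+1}$; spot checks at $N=0,1$ confirm the result. The opening move (the residue substitution $t=(1-q)\lambda/(1+\lambda^2)$ converting $[t^{2N}]$ into a constant-term extraction in $\lambda$) is exactly the paper's, but from there your route genuinely diverges. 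The paper first invokes its rectangle-counting identity (Lemma \ref{rectlemma}) to rewrite $\sum_k(-i\lambda)^k/(1-i\lambda q^k)$ as $\sum_n q^{n^2}\lambda^{2n}(1-\lambda^2q^{2n})/(1+\lambda^2q^{2n})$, then expands $(1+\lambda^2q^{2n})^{-1}$ geometrically to get a triple sum, extracts the constant term, completes the square in $q^{n^2+2nl}$, and finally kills a leftover $-\binom{2N}{N}/(N+1)$ term via a boundary cancellation at $l=m+2$. You instead expand geometrically at the outset and re-sum along diagonals $n=k+j$, which replaces the rectangle lemma by the polynomial sums $S_n=\sum_k(-1)^kq^{k(n-k)}$ (in effect the antidiagonal slices of the same rectangle generating function), exploit $S_{2m+1}=0$ and the telescoping against $(1+\lambda^2)$ \emph{before} coefficient extraction, and defer completing the square to the single identity $(-1)^m(S_{2m+2}-1)=A_m$. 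Your version buys a cleaner bookkeeping: one summation index throughout, no triple sums, and the final step is a transparent two-term binomial recombination rather than the paper's index shifts plus boundary cancellation. What the paper's route buys is reusability — Lemma \ref{rectlemma} is set up once and applied verbatim to the $q$-secant case (Corollary \ref{cors2}) with the substitution $\lambda\mapsto\lambda\sqrt q$, and it carries a combinatorial interpretation of the $q^{n^2}$ factors. Your argument adapts to the secant case just as easily, so this is a legitimate and arguably tidier alternative proof.
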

For the path diagrams defined via (\ref{secantfunc}), we obtain the following theorem.
\begin{theorem}
For $w\geq0$, 
\begin{equation}
G'_w(t,q)=\cfrac{1}{1-\dfrac{\lambda^2(1-q)\left[\bar{\lambda}^w\psi(\lambda,q^{2})\psi\left(\bar{\lambda},q^{w+2}\right)-\lambda^w\psi\left(\bar{\lambda},q^{2}\right)\psi(\lambda,q^{w+2})\right]}{{(1+\lambda^2)\left[\bar{\lambda}^w\psi(\lambda,q)\psi\left(\bar{\lambda},q^{w+2}\right)
-\lambda^{w+2}\psi\left(\bar{\lambda},q\right)\psi(\lambda,q^{w+2})\right]}}}\,,
\label{full solution1}
\end{equation}
where $\lambda$ is the root of $\lambda^2-\lambda(1-q)/t+1=0$ and $\bar{\lambda}=1/\lambda$.
\label{theoremnot3}
\end{theorem}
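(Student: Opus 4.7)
The plan is to adapt the derivation of Theorem~\ref{theorem3} to the different weighting induced by~\eqref{secantfunc}. The first step is to encode $G'_w(t,q)$ as a terminating continued fraction via Flajolet's theory of weighted Dyck paths. Under \eqref{secantfunc} the northeast step $a_j$ carries the $q$-integer weight $[j+1]_q=(1-q^{j+1})/(1-q)$ while the southeast step $b_k$ carries $[k]_q$, so that peaks at height $i$ contribute $[i]_q^2$ (replacing the product $[i]_q[i+1]_q$ of the tangent case). This yields
\begin{equation*}
G'_w(t,q)=\cfrac{1}{1-\cfrac{[1]_q^2\, t^2}{1-\cfrac{[2]_q^2\, t^2}{\ddots-\cfrac{[w]_q^2\, t^2}{1}}}}\,,
\end{equation*}
which is the secant analogue of the continued fraction used in the proof of Theorem~\ref{theorem3}.

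Next I would express $G'_w=Q_w/P_w$, where the numerator and denominator of the $n$-th convergent satisfy the common three-term recurrence $R_{n+1}=R_n-[n+1]_q^2\, t^2\, R_{n-1}$, distinguished only by their initial data. Introducing the parametrisation $t=\lambda(1-q)/(1+\lambda^2)$ from $\lambda^2-\lambda(1-q)/t+1=0$, the task is to find two linearly independent closed-form solutions of the recurrence. The shape of~\eqref{full solution1} suggests trying $R_n=\lambda^n\psi(\lambda,q^{n+2})$ and $R_n=\bar\lambda^n\psi(\bar\lambda,q^{n+2})$. Verifying that these are indeed solutions is the main technical obstacle: it amounts to establishing a contiguous relation among $\psi(\lambda,q^m)$, $\psi(\lambda,q^{m+1})$ and $\psi(\lambda,q^{m+2})$ from the series~\eqref{psi}, and then checking that the $q$-Pochhammer factors $(\pm i\lambda\sqrt{q};q)_k$ combine so that the coefficient in the relation collapses to exactly $[n+1]_q^2\, t^2$ under the given parametrisation; the appearance of the square root $\sqrt{q}$ in $\psi$ (absent from $\phi$) is precisely what is required to produce a perfect square instead of the product $[i]_q[i+1]_q$.

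With two fundamental solutions in hand, the general solution is $R_n=A\,\lambda^n\psi(\lambda,q^{n+2})+B\,\bar\lambda^n\psi(\bar\lambda,q^{n+2})$. Matching initial conditions determines $A$ and $B$ separately for $P_n$ and $Q_n$; the shift in the argument of $\psi$ between the two sets of coefficients is what accounts for the $q^2$ appearing in the numerator of~\eqref{full solution1} versus the $q$ in the denominator, while evaluation at $n=w$ produces the common factors $\psi(\cdot,q^{w+2})$ together with the prefactors $\lambda^w$ and $\bar\lambda^w$. Forming the ratio $Q_w/P_w$, cancelling an overall factor of $\lambda^2(1-q)/(1+\lambda^2)$, and rewriting $G'_w=1/(1-(1-1/G'_w))$ then yields the form~\eqref{full solution1}. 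A brief comparison with the tangent derivation confirms that the entire argument parallels that of Theorem~\ref{theorem3}, with the only genuine changes being the replacement of $\phi$ by $\psi$ and a uniform downward shift of the $q$-arguments, which is exactly what the modified possibility function predicts.
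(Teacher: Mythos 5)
Your overall strategy coincides with the paper's: write $G'_w$ as the finite continued fraction with level-$k$ coefficient $[k]_q^2t^2$, pass to the common three-term recurrence satisfied by the numerator and denominator of the convergent, exhibit two independent solutions built from $\psi$, fix the constants from the initial data, and form the ratio. The gap is in the middle step. The functions $\lambda^n\psi(\lambda,q^{n+2})$ and $\bar\lambda^n\psi(\bar\lambda,q^{n+2})$ do \emph{not} satisfy the recurrence $R_{n+1}=R_n-[n+1]_q^2\,t^2R_{n-1}$, so the verification you defer to a contiguous relation cannot close as stated: the coefficient in the three-term relation that $\lambda^n\psi(\lambda,q^{n+2})$ actually satisfies does not collapse to $[n+1]_q^2t^2$. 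What these functions do satisfy is the \emph{rescaled} recurrence $(1-q^{n+1})S(n)-\tfrac{1}{\alpha}S(n-1)+(1-q^n)S(n-2)=0$, obtained from the convergent recurrence by the substitution $R(n)=\alpha^n(q;q)_{n+1}S(n)$. That substitution is the idea missing from your outline: it is what removes the $q^{2n}$ term hidden in the coefficient $(1-q^n)^2$, and only after it does the ansatz $S(n)=\lambda^n\sum_k c_kq^{kn}$ (or, equivalently, your guess-and-verify with $\psi$) succeed.

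The omission propagates to the endgame. With the prefactor in place, the initial data $P'_{-1}=0$, $P'_0=1$ and $Q'_{-1}=1$, $Q'_0=1$ translate into $S(-1)=0$, $S(0)=1/(1-q)$ and $S(-1)=\alpha$, $S(0)=1/(1-q)$ respectively; it is these values, not $R_{-1},R_0$ applied to your proposed (non-)solutions, that yield the constants $A,B$ whose combination produces the $\psi(\cdot,q)$ versus $\psi(\cdot,q^2)$ asymmetry in \eqref{full solution1}. The prefactor $\alpha^w(q;q)_{w+1}$ is common to $P'_w$ and $Q'_w$ and cancels in the ratio, which is why it is invisible in the final formula, but it cannot be skipped in the derivation. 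Your observation that the $\sqrt{q}$ shift in the parameters of $\psi$ is what converts the product $[i]_q[i+1]_q$ of the tangent case into the square $[i]_q^2$ is correct and consistent with the paper; the rest of your outline would go through once the rescaling is inserted.
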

Taking the limit $w\to\infty$ we obtain the generating function $G'(t,q)$ for unrestricted path diagrams.
\begin{corollary}\label{cors1}
The generating function of $q$-secant numbers is 
\begin{equation}
G'(t,q)=(1+ \lambda^2) \sum\limits_{k=0}^{\infty}\dfrac{(-i\lambda\sqrt{q})^k}{(1-i\lambda \sqrt{q} q^{k})}\;,
\label{halfplaneofsecant}
\end{equation}
where $\lambda$ is the root of $\lambda^2-\lambda(1-q)/t+1=0$ with smallest modulus.
\end{corollary}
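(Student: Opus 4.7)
I would first take the $w\to\infty$ limit in Theorem~\ref{theoremnot3}. The two roots of $\lambda^2-\lambda(1-q)/t+1=0$ satisfy $\lambda\bar\lambda=1$, so the chosen root of smallest modulus has $|\lambda|<1$, giving $\lambda^w\to 0$ and $\lambda^{2w+2}\to 0$. Since $|q|<1$, also $q^{w+2}\to 0$, and from the series definition (\ref{psi}) one has $\psi(\mu,q^{w+2})\to\psi(\mu,0)=1$ for both $\mu=\lambda$ and $\mu=\bar\lambda$. Dividing the inner numerator and denominator of (\ref{full solution1}) by the dominant factor $\bar\lambda^w\psi(\bar\lambda,q^{w+2})$ and sending $w\to\infty$, the terms carrying $\lambda^w$ or $\lambda^{w+2}$ vanish and the closed form collapses to
$$
G'(t,q)=\frac{(1+\lambda^2)\psi(\lambda,q)}{(1+\lambda^2)\psi(\lambda,q)-\lambda^2(1-q)\psi(\lambda,q^2)}.
$$

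The second step is to transform both $\psi(\lambda,q)$ and $\psi(\lambda,q^2)$ via Heine's first transformation, swapping the two symmetric upper parameters of the ${}_2\phi_1$ so that $c/b$ is forced to equal $a=i\lambda\sqrt q$. The resulting ${}_2\phi_1$ then contains a ratio of $q$-Pochhammer symbols that telescopes, via the identity $(i\lambda\sqrt q;q)_k/(i\lambda q^{3/2};q)_k=(1-i\lambda\sqrt q)/(1-i\lambda\sqrt q\,q^k)$. This produces a factorization $\psi(\lambda,q)=P\,S$ with $S=\sum_{k\ge 0}(-i\lambda\sqrt q)^k/(1-i\lambda\sqrt q\,q^k)$ and $P=(i\lambda\sqrt q;q)_\infty(-i\lambda\sqrt q;q)_\infty/[(\lambda^2 q;q)_\infty(q;q)_\infty]$, together with an analogous expression for $\psi(\lambda,q^2)$ in which the telescoping instead gives denominators $(1-i\lambda\sqrt q\,q^k)(1-i\lambda\sqrt q\,q^{k+1})$ and the summand acquires an extra factor of $1-q^{k+1}$.

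The final step, which I expect to be the main obstacle, is the algebraic reduction showing $\lambda^2(1-q)\psi(\lambda,q^2)/P=(1+\lambda^2)S-1$. Once this is established, the denominator in the displayed expression for $G'(t,q)$ above collapses to $P$, yielding $G'(t,q)=(1+\lambda^2)S$ as claimed. I would carry out the reduction by partial-fractioning $(1-q^{k+1})/[(1-\alpha q^k)(1-\alpha q^{k+1})]$ with $\alpha:=i\lambda\sqrt q$, then shifting the index of the second piece by one so that both pieces re-assemble into $S$ plus an explicit $k=0$ boundary term. The constants then simplify using $\alpha^2=-\lambda^2 q$ and $\alpha^2-q=-q(1+\lambda^2)$. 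A convenient sanity check is the $q\to 0$ specialization: there $S=P=\psi(\lambda,q^2)=1$, and the target identity degenerates to $(1+\lambda^2)-1=\lambda^2$, consistent with $G'(t,0)$ reducing to the Catalan generating function.
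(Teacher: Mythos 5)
Your proposal is correct and follows essentially the same route as the paper: take the $w\to\infty$ limit of Theorem~\ref{theoremnot3} using $|\lambda|<1$ and $\psi(\cdot,q^{w+2})\to 1$, apply Heine's transformation to $\psi(\lambda,q)$ and $\psi(\lambda,q^2)$ so that the resulting ${}_2\phi_1$'s telescope, and finish by partial fractions in $q^k$ with an index shift. Your reformulation of the last step as the single identity $\lambda^2(1-q)\psi(\lambda,q^2)/P=(1+\lambda^2)S-1$ is only a cosmetic repackaging of the paper's computation of $N$ and $D$, and your constants (e.g.\ $\alpha^2-q=-q(1+\lambda^2)$) check out.
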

Extracting coefficients of this generating function, we can derive a result equivalent to one obtained previously by different methods \cite[Theorem 1.5]{JosuatVerges20101892}.
\begin{corollary}\label{cors2}
\begin{equation}
[t^{2N}]G'(t,q)=\dfrac{1}{(1-q)^{2N}}\sum\limits_{m=0}^{N}\dfrac{q^{m^2+m}\left(\sum\limits_{l=-m}^{m}(-1)^l q^{-l^2}\right)(2m+1)\binom{2N}{N+m}}{N+m+1}.
\label{sectcoeff}
\end{equation}
\end{corollary}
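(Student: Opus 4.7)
The plan is to extract $[t^{2N}]$ from the closed form of $G'(t,q)$ in Corollary \ref{cors1} in two steps: first, simplify the inner $q$-series in \eqref{halfplaneofsecant} so that the theta-type sum of \eqref{sectcoeff} appears explicitly as a coefficient of $\lambda^{2N}$; second, pass from $\lambda$ to $t$ via the Lagrange--B\"urmann formula applied to the parametrisation $t=(1-q)\lambda/(1+\lambda^2)$ coming from the defining quadratic.

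Setting $z:=i\lambda\sqrt{q}$ and expanding $1/(1-zq^k)=\sum_{j\ge0}z^jq^{jk}$, the inner sum in \eqref{halfplaneofsecant} becomes $\sum_{n\ge0}z^n A_n$ with $A_n:=\sum_{k=0}^n(-1)^kq^{k(n-k)}$. The substitution $k\mapsto n-k$ shows $A_n=(-1)^n A_n$, so $A_n=0$ for odd $n$; for $n=2N$, the shift $k=N+l$ gives $A_{2N}=(-1)^N q^{N^2}\alpha_N$ with $\alpha_N:=\sum_{l=-N}^N(-1)^l q^{-l^2}$. Using $z^{2N}=(-1)^N\lambda^{2N}q^N$, I obtain
\begin{equation*}
G'(t,q)=(1+\lambda^2)\sum_{N\ge0}\lambda^{2N}q^{N^2+N}\alpha_N,
\end{equation*}
in which the theta-type factor of \eqref{sectcoeff} is now explicit.

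For the second step, the defining quadratic gives $\tau:=t/(1-q)=\lambda/(1+\lambda^2)$, i.e.\ $\lambda=\tau(1+\lambda^2)$. Lagrange--B\"urmann inversion then reads
\begin{equation*}
[\tau^{2N}]H(\lambda(\tau))=[\lambda^{2N}]H(\lambda)\,(1-\lambda^2)(1+\lambda^2)^{2N-1}
\end{equation*}
for any formal power series $H$. Applying this to the displayed form of $G'(t,q)$ and using $(1-\lambda^2)(1+\lambda^2)^{2N}=\sum_k\bigl[\binom{2N}{k}-\binom{2N}{k-1}\bigr]\lambda^{2k}$, the coefficient extraction selects $k=N-M$ and, after the symmetry $\binom{2N}{N-M}=\binom{2N}{N+M}$, produces
\begin{equation*}
[\tau^{2N}]G'(t,q)=\sum_{M=0}^N q^{M^2+M}\alpha_M\left[\binom{2N}{N+M}-\binom{2N}{N+M+1}\right].
\end{equation*}
The elementary identity $\binom{2N}{N+M}-\binom{2N}{N+M+1}=\binom{2N}{N+M}(2M+1)/(N+M+1)$, together with $[t^{2N}]=(1-q)^{-2N}[\tau^{2N}]$, then delivers \eqref{sectcoeff}.

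The hard part is the first step: recognising that the alternating double sum $A_n$ vanishes on odd indices and collapses on even indices to precisely the theta-like partial sum $\alpha_N$ in \eqref{sectcoeff}. Once $G'(t,q)$ has been reduced to the form $(1+\lambda^2)\sum_N\lambda^{2N}q^{N^2+N}\alpha_N$, the Lagrange--B\"urmann step and the binomial manipulation are routine.
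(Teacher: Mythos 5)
Your proof is correct, but it follows a genuinely different route from the paper's. The paper first invokes Lemma \ref{rectlemma} (the rectangle-counting identity) to convert the Lambert-type series in \eqref{halfplaneofsecant} into $\sum_n q^{n^2+n}\lambda^{2n}(1-\lambda^2q^{2n+1})/(1+\lambda^2q^{2n+1})$, then extracts $[t^{2N}]$ via a contour integral with the substitution $t=(1-q)\lambda/(1+\lambda^2)$, expands everything into a triple sum over $k,n,l$, completes the square in the exponent of $q$, and shifts indices to reassemble the theta-type factor. You bypass both ingredients: the direct geometric expansion $1/(1-zq^k)=\sum_j z^jq^{jk}$ together with the reflection $k\mapsto n-k$ kills the odd diagonals and collapses the even ones immediately to $\alpha_N=\sum_{l=-N}^N(-1)^lq^{-l^2}$, giving the intermediate form $G'(t,q)=(1+\lambda^2)\sum_N\lambda^{2N}q^{N^2+N}\alpha_N$ that the paper never writes down; and Lagrange--B\"urmann inversion in the form $[\tau^{2N}]H(\lambda(\tau))=[\lambda^{2N}]H(\lambda)(1-\lambda^2)(1+\lambda^2)^{2N-1}$ (which I checked is the correct variant for $\lambda=\tau(1+\lambda^2)$, e.g.\ it reproduces the Catalan numbers for $H(\lambda)=\lambda$) replaces the contour integral and the subsequent index gymnastics with a single binomial difference $\binom{2N}{N+M}-\binom{2N}{N+M+1}=\binom{2N}{N+M}(2M+1)/(N+M+1)$. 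The two coefficient-extraction devices are of course equivalent in substance, but your packaging is shorter, avoids the need for Lemma \ref{rectlemma} altogether, and makes the appearance of the theta sum $\alpha_M$ transparent rather than the outcome of a completing-the-square manipulation; the paper's route has the advantage of reusing the same machinery verbatim for the $q$-tangent case (Corollary \ref{cor36}), where the extra constant term $1-(1+\lambda^2)R(\cdots)$ makes the bookkeeping heavier.
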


This paper is organized as follows. Section 2 contains further conventions and preliminaries. Section 3 contains the proofs of Theorem \ref{theorem3}
and Corollaries \ref{cort1} and \ref{cor36}. Section 4 contains the proofs of Theorem \ref{theoremnot3} and Corollaries \ref{cors1} and \ref{cors2}. Section 5 contains some novel identities discovered.

\section{Conventions and Preliminaries}
In \cite{Flajolet2006992}, the correspondence between generating functions and continued fractions has been discussed in detail. In particular, in \cite[Theorem 3A]{Flajolet2006992} and  \cite[Theorem 3B]{Flajolet2006992} we find continued fraction expansions for the formal generating functions of path diagrams bounded by a Dyck path with possibility functions given by (\ref{tangentfunc}) and (\ref{secantfunc}), respectively.
It turns out that the counting numbers in these generating functions are the Euler numbers $E_N$, with $G(t,1)$ and $G'(t,1)$ summing over the odd and even Euler numbers, i.e.
\begin{equation}
tG(t,1)=\sum_{N=0}^\infty E_{2N+1}t^{2N+1}=\cfrac{t}{1-\cfrac{1.2 t^2}{1-\cfrac{2.3 t^2}{\ddots}}}\,,
\label{continued fraction}
\end{equation}
and
\begin{equation}
G'(t,1)=\sum_{N=0}^\infty E_{2N}t^{2N}=\cfrac{1}{1-\cfrac{1.1 t^2}{1-\cfrac{2.2 t^2}{\ddots}}}
\label{continued fraction2}
\end{equation}
as formal non-convergent power series.
The odd and even Euler numbers $E_{2N+1}$ and $E_{2N}$ are also known as tangent and secant numbers, respectively, as they occur in the Taylor expansion
\begin{equation}
\tan t+\sec t=\sum_{N=0}^\infty E_N\frac{t^N}{N!}\;.
\end{equation}
The formulas (\ref{continued fraction}) and (\ref{continued fraction2}) were generalised in \cite{JosuatVerges20101892,qtangent} by introducing a variable $q$ conjugate to the sum of the column heights. Briefly, this corresponds to replacing an integer $k$ in (\ref{continued fraction}) and (\ref{continued fraction2}) by the $q$-integer $[k]_q=1+q+\cdots q^k$ (more details are given in Proposition \ref{Prop21}), leading to
\begin{equation}
tG(t,q)=\sum_{N=0}^\infty E_{2N+1}(q)t^{2N+1}=\cfrac{\alpha(1-q)}{1-\cfrac{\alpha^2(1-q)(1-q^2)}{1-\cfrac{\alpha^2(1-q^2)(1-q^3)}{\ddots}}}
\label{Igeneral continued fraction}
\end{equation}
and
\begin{equation}
G'(t,q)=\sum_{N=0}^\infty E_{2N}(q)t^{2N}=\cfrac{1}{1-\cfrac{\alpha^2(1-q)^2}{1-\cfrac{\alpha^2(1-q^2)^2}{\ddots}}}\,,
\label{Igeneral continued fraction2}
\end{equation}
where we have introduced
\begin{equation}
\alpha=\dfrac{t}{1-q}
\label{alpha1}
\end{equation}
and $E_N(q)$ are the $q$-Euler numbers. In particular $E_{2N+1}(q)$ and $E_{2N}(q)$ are known as $q$-tangent and $q$-secant numbers, respectively \cite{qtangent}.
Below we shall use $\alpha$ and $t$ interchangably, as convenient.

The following proposition is the starting point of our analysis. It expresses the height-restricted path diagram generating functions $G_w(t,q)$ and $G'_w(t,q)$ as finite continued fractions.
\begin{proposition}
For $w \geq 0$, 
\begin{equation}
G_w(t,q)=\cfrac{1}{1-\cfrac{\alpha^2(1-q)(1-q^2)}{1-\cfrac{\alpha^2(1-q^2)(1-q^3)}{{\ddots-}{\cfrac{\alpha^2(1-q^{w-1})(1-q^{w})}{1-\alpha^2(1-q^w)(1-q^{w+1})}}}}}
\label{general continued fraction}
\end{equation}
and
\begin{equation}
G'_w(t,q)=\cfrac{1}{1-\cfrac{\alpha^2(1-q)^2}{1-\cfrac{\alpha^2(1-q^2)^2}{\ddots-\cfrac{\alpha^2(1-q^{w-1})^2}{1-\alpha^2(1-q^w)^2}}}}\, .
\label{general continued fraction2}
\end{equation}
\label{Prop21}
\end{proposition}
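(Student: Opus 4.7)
My plan is to reduce the enumeration of path diagrams to that of weighted Dyck paths of bounded height, and then apply the classical continued-fraction expansion of \cite[Theorems 3A, 3B]{Flajolet2006992} in its truncated (height-restricted) form. This is a direct specialization; the only real care is in the top-level truncation and in converting the resulting $q$-integer weights into the factored form involving $\alpha$ and $(1-q^k)$.

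First I would sum over the column heights $s_i$ at each step $u_i$ of the Dyck path. Since $s_i$ contributes a factor $q^{s_i}$ with $0 \leq s_i \leq pos(u_i)$, this produces a per-step weight
\begin{equation*}
\sum_{s=0}^{pos(u_i)} q^{s} \;=\; [pos(u_i)+1]_q \;=\; \frac{1-q^{pos(u_i)+1}}{1-q}\,.
\end{equation*}
Thus $G_w(t,q)$ (resp.\ $G'_w(t,q)$) becomes the generating function, weighted by $t^{2N}$, of Dyck paths of half-length $N$ and height at most $w$, in which each up-step starting at height $k$ carries weight $[k+1]_q$, and each down-step starting at height $k$ carries weight $[k+1]_q$ in the tangent case (\ref{tangentfunc}) or $[k]_q$ in the secant case (\ref{secantfunc}).

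Next, let $F_k^{(w)}$ denote the generating function of such weighted Dyck path segments that start and end at height $k$ and are confined to heights $\{k,k+1,\ldots,w\}$. The standard first-return decomposition (a path of length $\geq 1$ starts with an up-step at height $k$, traces an arbitrary path at heights $\geq k+1$, returns by a down-step from height $k+1$, and is then followed by an arbitrary path at heights $\geq k$) yields
\begin{equation*}
F_k^{(w)} \;=\; \frac{1}{1 - c_{k+1}\, F_{k+1}^{(w)}}\,, \qquad 0 \leq k \leq w-1\,,
\end{equation*}
with the boundary condition $F_w^{(w)}=1$, and where $c_{k+1}=t^{2}\cdot (\text{up-weight at height }k)\cdot(\text{down-weight at height }k+1)$. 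Iterating this recursion down from $k=0$ to $k=w$ collapses to a finite continued fraction of depth $w$ whose innermost denominator is $1-c_w$ (since $F_w^{(w)}=1$), and $F_0^{(w)}$ equals $G_w(t,q)$ respectively $G'_w(t,q)$.

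Finally I would substitute the explicit weights. Using $[m]_q(1-q)=1-q^m$ and $\alpha=t/(1-q)$, the tangent case gives $c_{k+1}=t^2[k+1]_q[k+2]_q=\alpha^2(1-q^{k+1})(1-q^{k+2})$, reproducing (\ref{general continued fraction}); the secant case gives $c_{k+1}=t^2[k+1]_q^2=\alpha^2(1-q^{k+1})^2$, reproducing (\ref{general continued fraction2}). No step is really an obstacle here: Flajolet's theorem supplies the entire structure, and the work is essentially bookkeeping. The one place to be attentive is the top-level truncation: it is the boundary condition $F_w^{(w)}=1$ that produces the bare denominator $1-c_w$ at the deepest level of the continued fraction, rather than a further nested fraction.
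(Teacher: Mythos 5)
Your proposal is correct and follows essentially the same route as the paper: both reduce the path diagrams to Dyck paths weighted by the $q$-integers $a_i=[i+1]_q$ and $b_j=[j+1]_q$ (resp.\ $[j]_q$) obtained by summing $q^{s}$ over the allowed column heights, and then invoke the height-truncated Stieltjes continued fraction with numerators $t^2 a_k b_{k+1}=\alpha^2(1-q^{k+1})(1-q^{k+2})$ (resp.\ $\alpha^2(1-q^{k+1})^2$). The only cosmetic difference is that you re-derive the truncated continued fraction via the first-return decomposition with boundary condition $F_w^{(w)}=1$, whereas the paper simply cites the corresponding theorems of Flajolet for that structure.
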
              
\begin{proof}

From the combinatorial theory of continued fractions given in \cite{Flajolet2006992}, if $X=(a_0,a_1,a_2,..,b_0,b_1,..)$ then the Stieltjes type continued fraction is  
$$S_k(X,t)=\cfrac{1}{1-\cfrac{a_0 b_1 t^2}{1-\cfrac{a_1 b_2 t^2}{\ddots-\cfrac{a_{k-2}b_{k-1}t^2}{1-a_{k-1} b_k t^2}}}}$$
where $a_i$ corresponds to the weight of a northeast step starting at height $i$, $b_j$ corresponds to the weight of a southeast step starting at height $j$, and $t$ is conjugate to the length of the Dyck path. 
Hence, we only need to specify the weights $a_i$ and $b_j$.

Possible column heights below a northeast step starting at height $i$ range from $0$ to $i$, and hence $a_i=1+q+\ldots+q^i$. For $G_w$ possible column heights below a southeast step starting at height $j$ range from $0$ to $j$, and hence $b_j=1+q+\ldots+q^j$, whereas for $G'_w$ possible column heights below a southeast step starting at height $j$ range from $0$ to $j-1$, and hence $b_j=1+q+\ldots+q^{j-1}$.
\end{proof}

It is obvious that we can write the right-hand sides of \eqref{general continued fraction} and \eqref{general continued fraction2} as rational functions. 
\begin{proposition}
For $w\geq 0$, 
\begin{equation}
G_w(t,q)=\dfrac{P_w(\alpha,q)}{Q_w(\alpha,q)}\quad\text{and}\quad G'_w(t,q)=\dfrac{P'_w(\alpha,q)}{Q'_w(\alpha,q)} \, ,
\label{rational function}
\end{equation}
where
\begin{eqnarray}
  P_w&=&
  \begin{cases}
    0 & w=-1 \\
    1 & w=0\\
    P_{w-1}-\alpha^2(1-q^w)(1-q^{w+1})P_{w-2}& w\geq 1
  \end{cases}\;,
  \label{recursion P}
\\
  Q_w&=&
  \begin{cases}
    1 & w=-1 \\
    1 & w=0\\
    Q_{w-1}-\alpha^2(1-q^w)(1-q^{w+1})Q_{w-2}& w\geq 1
  \end{cases}\;,
  \label{recursion Q}
\\
  P'_w&=&
  \begin{cases}
    0 & w=-1 \\
    1 & w=0\\
    P_{w-1}-\alpha^2(1-q^w)^2 P_{w-2}& w\geq 1
  \end{cases}\qquad\text{and}
  \label{recursion P1}
\\
  Q'_w&=&
  \begin{cases}
    1 & w=-1 \\
    1 & w=0\\
    Q_{w-1}-\alpha^2(1-q^w)^2 Q_{w-2}& w\geq 1
  \end{cases}\;.
  \label{recursion Q1}
\end{eqnarray}
\label{proposition2}
\end{proposition}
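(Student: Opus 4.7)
The proof is a direct application of the classical theory of convergents of continued fractions. Both identities in \eqref{rational function} have the same structure and differ only in the values of the partial numerators $a_w$---namely $a_w = \alpha^2(1-q^w)(1-q^{w+1})$ in the unprimed case and $a_w = \alpha^2(1-q^w)^2$ in the primed case---so a single argument suffices for both.

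The plan is to introduce the one-parameter family
\[
g_w(z) = \cfrac{1}{1-\cfrac{a_1}{1-\cfrac{a_2}{\ddots-\cfrac{a_{w-1}}{1-z}}}}
\]
so that the right-hand sides of \eqref{general continued fraction} and \eqref{general continued fraction2} equal $g_w(a_w)$, and then to prove by induction on $w$ the closed form
\[
g_w(z) = \frac{P_{w-1} - z\,P_{w-2}}{Q_{w-1} - z\,Q_{w-2}}\,.
\]
With the chosen initial values $P_{-1}=0$, $Q_{-1}=1$, $P_0=Q_0=1$, the base case $w=1$ reduces to the tautology $g_1(z)=1/(1-z)$. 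For the inductive step, peeling off one level from the bottom yields the functional identity $g_w(z) = g_{w-1}\!\bigl(a_{w-1}/(1-z)\bigr)$; substituting the inductive formula and clearing the factor $1-z$ in numerator and denominator produces
\[
g_w(z) = \frac{(P_{w-2} - a_{w-1} P_{w-3}) - z\,P_{w-2}}{(Q_{w-2} - a_{w-1} Q_{w-3}) - z\,Q_{w-2}}\,,
\]
which is the desired expression after one application of \eqref{recursion P}--\eqref{recursion Q}. Setting $z = a_w$ and invoking the recursion once more gives $G_w = P_w/Q_w$. The primed case follows identically with $a_w = \alpha^2(1-q^w)^2$ substituted throughout.

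There is no genuine obstacle in this plan; it is a routine verification of the standard convergent recurrence adapted to the particular finite bracketing in \eqref{general continued fraction} and \eqref{general continued fraction2}. The only point requiring a little care is the choice of the boundary conditions $P_{-1}=0$ and $Q_{-1}=1$, which is forced by the base-case identity $g_1(z) = 1/(1-z)$ rather than by any subsequent value of the recursion.
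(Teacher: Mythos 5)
Your proposal is correct and takes essentially the same approach as the paper: both identify $P_w/Q_w$ with the convergents of the finite continued fraction and reduce the claim to the standard three-term recurrence for convergents. The only difference is that the paper simply cites this recurrence (the $h$-th convergent of a $J$-fraction in Flajolet), whereas you verify it from scratch by induction via the peeling identity $g_w(z)=g_{w-1}\bigl(a_{w-1}/(1-z)\bigr)$, making the argument self-contained.
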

\begin{proof}
The initial conditions follow from the fact that $G_{-1}(t,q)=G'_{-1}(t,q)=0/1$. This implies that $P_{-1}=P'_{-1}=0$ and $Q_{-1}=Q'_{-1}=1$. Also for $w=0$ we have $G_{0}(t,q)=G'_{0}(t,q)=1/1$. For $w \geq 1$ we compare with the $h$-th convergent of the $J$-fraction on page 152 of \cite{Flajolet2006992}. We have $z=t$ and $a_k=1$ for $k \geq 1 $ and $b_k=(1-q^w)(1-q^{w+1})$ and $c_k=0$ for $k \geq0$. This reduces to the recurrence equations given in \eqref{recursion P} and \eqref{recursion Q}. For the generating function $G'_w(t,q)$ we see that, instead, $b_k=(1-q^w)(1-q^{w})$, which results in the recurrence equations given in \eqref{recursion P1} and \eqref{recursion Q1}.
\end{proof} 
\section{$q$-tangent numbers}
We shall prove Theorem \ref{theorem3} by solving the recurrence relations \eqref{recursion P} and \eqref{recursion Q}.
We can write $P_w$ and $Q_w$ as the linear combination of two basic hypergeometric functions and determine the coefficients from the initial conditions  of the recurrences given in Proposition \ref{proposition2}.

\begin{proof}[Proof of Theorem \ref{theorem3}]
For $w \geq 1$ the recurrence relations for $P_w(\alpha,q)$ and $Q_w(\alpha,q)$ are the same, so we represent them both by $R(w)$ and solve simultaneously. 
From the recursion given in \eqref{recursion P} and \eqref{recursion Q} we have for $w\geq1$,
\begin{equation}
R(w)=R(w-1)-\alpha^2(1-q^w)(1-q^{w+1})R(w-2).
\label{recurrence}
\end{equation}
\indent 
Unlike a linear recurrence with constant coefficients, this cannot be solved by a standard method because we have $w$-dependent coefficients. Moreover, the occurrence of both $q^w$ and $q^{2w}$ poses a difficulty, so our next step will be to eliminate the term containing $q^{2w}$ by appropriate rewriting of the recurrences. It is evident from the coefficient of $R(w-2)$ that multiplying by a $q$-factorial will simplify \eqref{recurrence} appropriately. Rescaling the recursion \eqref{recurrence} by substituting
\begin{equation}
R(w)=\alpha^w(q;q)_{w+1}S(w)
\label{ansatz1}
\end{equation}
leads to the recurrence
\begin{equation}
S(w)-\frac1\alpha S(w-1)+S(w-2)=q^{w+1}(S(w)+S(w-2))
\label{scaled recurrence}
\end{equation}
for $w \geq 1$.
This eliminates $q^{2w}$ from the recurrence as intended, as the right hand side only contains a $q^w$ prefactor.
The left hand side of equation \eqref{scaled recurrence} is a linear homogeneous recurrence relation with a characteristic polynomial
\begin{equation}
P(\lambda)=\lambda^2-\frac{\lambda}{\alpha}+1.
\label{polynomial}
\end{equation}
The two roots $\lambda_1$ and $\lambda_2$ of the characteristic polynomial are reciprocal to each other,
\begin{equation}
\label{prod of roots}
\lambda_1 \lambda_2=1,
\end{equation}
a fact that we will need to use below.
If the right hand side of the recurrence relation \eqref{scaled recurrence} was zero then the solution could be written as a $q$-independent linear combination of the powers of the roots of the characteristic polynomial. To solve the recurrence \eqref{scaled recurrence} in general, we use the ansatz
\begin{equation}
 S(w)=\lambda^w\sum\limits_{k=0}^{\infty}c_k q^{kw},
 \label{ansatz2}
\end{equation} 
which has been shown to work when there are powers of $q^w$ in such a linear recurrence \cite{RSOS,RSOSslit}.
The recurrence relation for $c_k$ can then be read off from
\begin{equation}
P(\lambda)c_0+\sum_{k=1}^{\infty}q^{kw-2k}\left(P(\lambda q^k)c_k-(\lambda^2 q^{2k}+ q^2)qc_{k-1}\right)=0.
\label{Two term recurrence}
\end{equation}
This equation is satisfied if $P(\lambda)=0$ and all the coefficients in the sum vanish, i.e.  $P(\lambda q^k)c_k-(\lambda^2 q^{2k}+q^2)qc_{k-1}=0$. 
The latter condition implies 
\begin{equation}
c_k=\frac{(\lambda^2 q^{2k}+q^2)qc_{k-1}}{P(\lambda q^k)}\, .
\label{ck}
\end{equation}
The condition $P(\lambda)=0$ enables us to express $\alpha$ in terms of $\lambda$ as
$\alpha={\lambda}/(1+\lambda^2)$,
and eliminating $\alpha$ in the characteristic polynomial \eqref{polynomial}, we find
\begin{equation}
P(\lambda q^k)=(1-q^k)(1-\lambda^2 q^k).
\label{plambdaq}
\end{equation}
Now substituting in the value of $P(\lambda q^k)$ from  \eqref{plambdaq} in \eqref{ck} and iterating it we have 
\begin{equation}
c_k=\frac{(-\lambda^2;q^2)_k\, q^{3k}}{(q;q)_k(\lambda^2 q;q)_k}
,
\label{Hypergeometric Ck}
\end{equation}
where we choose to write all products in terms of the $q$-Pochhammer symbol.
The full solution to the recurrence equation \eqref{scaled recurrence} is a linear combination of the ansatz (\ref{ansatz2}) over both roots of $P(\lambda)$. As $P(\lambda)=0$ implies $P(\bar{\lambda})=0$,
we can write the general solution for $S(w)$ as
\begin{equation}
S(w)=A\lambda^w\sum_{k=0}^{\infty}c_k(\lambda,q) q^{kw}+B\bar{\lambda}^w\sum_{k=0}^{\infty}c_k\left(\bar{\lambda},q\right)q^{kw}.
\end{equation}
We can now write the general solution in terms of a basic hypergeometric series by defining
\begin{equation}
\phi(\lambda,x)=\sum_{k=0}^{\infty}\frac{(-\lambda^2;q^2)_k x^{k}}{(q;q)_k(\lambda^2 q;q)_k}
=\,_2\phi_1(i\lambda,-i\lambda;\lambda^2 q;q,x),
\label{phi}
\end{equation}
where
$$_2\phi_1(a,b;c;q,x)=\sum_{k=0}^{\infty}\frac{(a;q)_k(b;q)_k\,x^k}{(c;q)_k (q;q)_k}.$$
Using this notation, the general solution $S(w)$ can simply be written as
\begin{equation}
S(w)=A\lambda^w\phi(\lambda,q^{w+3})+B\bar{\lambda}^w\phi\left(\bar{\lambda},q^{w+3}\right).
\label{linear general solution}
\end{equation}
Using the initial conditions 
$$S(-1)=0\qquad	S(0)=\frac{1}{1-q}$$
derived from \eqref{recursion P} and solving for $A$ and $B$, 
we get 
$$A=\frac{-\lambda^2\phi\left(\bar{\lambda},q^2\right)}{(1-q)\left(\phi(\lambda,q^2)\phi\left(\bar{\lambda},q^3\right)-\lambda^2\phi\left(\bar{\lambda},q^2\right)\phi(\lambda,q^3)\right)}$$
and
$$B=\frac{\phi(\lambda,q^2)}{(1-q)\left(\phi(\lambda,q^2)\phi\left(\bar{\lambda},q^3\right)-\lambda^2\phi\left(\bar{\lambda},q^2\right)\phi(\lambda,q^3)\right)}\, .$$
Similarly, using the initial conditions 
$$
S(-1)=\alpha\qquad	S(0)=\frac{1}{1-q}
$$
derived from \eqref{recursion Q} we get 
$$A=\frac{(\alpha)(\lambda)(1-q)\phi\left(\bar{\lambda},q^3\right)-\lambda^2\phi\left(\bar{\lambda},q^2\right)}{(1-q)\left(\phi(\lambda,q^2)\phi\left(\bar{\lambda},q^3\right)-\lambda^2\phi\left(\bar{\lambda},q^2\right)\phi(\lambda,q^3)\right)}$$
and
$$B=\frac{\phi(\lambda,q^2)-(\alpha)(\lambda)(1-q)\phi(\lambda,q^3)}{(1-q)\left(\phi(\lambda,q^2)\phi\left(\bar{\lambda},q^3\right)
-\lambda^2\phi\left(\bar{\lambda},q^2\right)\phi(\lambda,q^3)\right)}\,.$$
Substituting the full solution for $P_w(\alpha,q)$ and $Q_w(\alpha,q)$ in \eqref{rational function}, we arrive at the expression given in \eqref{full solution}. This completes the proof.
\end{proof}
By taking the limit of infinite $w$ in the generating function $G_w$, we derive an expression for the generating function of $q$-tangent numbers.
\begin{proof}[Proof of Corollary \ref{halfplanefortangent}]
We consider the right-hand side of \eqref{full solution}. We know that the basic hypergeometric functions converge when $|q| < 1$ using the ratio test. From \eqref{prod of roots} we see that one of the roots of the characteristic polynomial \eqref{polynomial} is less than one if $t$ is sufficiently small. We therefore choose the root $\lambda$ such that $|\lambda| < 1$.
When $w \rightarrow \infty$,
$$\phi(\lambda,q^{w+3})=\,_2\phi_1(i\lambda,-i\lambda;\lambda^2q;q,q^{w+3})\rightarrow \,_2\phi_1(i\lambda,-i\lambda;\lambda^2q;q,0)=1.$$ Also $$ |\lambda^w| \rightarrow 0.$$
This implies
\begin{equation}
G(t,q)=\cfrac{1}{1-\dfrac{\lambda^2(1-q)\phi(\lambda,q^{3})}{(1+\lambda^2)\phi(\lambda,q^{2})}}\, .
\label{HP}
\end{equation}
Heine's transformation formula for $_2\phi_1$ series \cite{Hein} is given by
\begin{equation}
_2\phi_1(a,b;c;q,z)=\frac{(b;q)_\infty(az;q)_\infty}{(c;q)_\infty (z;q)_\infty}\,_2\phi_1(c/b,z;az;q,b).
\label{Hein}
\end{equation}
Using this transformation we can write the basic hypergeometric functions in \eqref{HP} as follows
\begin{equation}
\phi(\lambda,q^2)
=\frac{(-i\lambda;q)_\infty(i\lambda q^2;q)_\infty}{(\lambda^2q;q)_\infty (q^2;q)_\infty} \,_2\phi_1(i\lambda q,q^2;i\lambda q^2;q,-i\lambda)
\label{fihein1}
\end{equation}
and 
\begin{equation}
\phi(\lambda,q^3)
=\frac{(-i\lambda;q)_\infty(i\lambda q^3;q)_\infty}{(\lambda^2q;q)_\infty (q^3;q)_\infty} \,_2\phi_1(i\lambda q,q^3;i\lambda q^3;q,-i\lambda).
\label{fihein2}
\end{equation}
Further substituting the transformations of basic hypergeometric functions from \eqref{fihein1} and \eqref{fihein2}  into \eqref{HP} yields
\begin{equation}
G(t,q)=\cfrac{1}{1-\dfrac{\lambda^2(1-q)\dfrac{(-i\lambda;q)_\infty(i\lambda q^3;q)_\infty}{(\lambda^2q;q)_\infty (q^3;q)_\infty} \,_2\phi_1(i\lambda q,q^3;i\lambda q^3;q,-i\lambda)}{(1+\lambda^2)\dfrac{(-i\lambda;q)_\infty(i\lambda q^2;q)_\infty}{(\lambda^2q;q)_\infty (q^2;q)_\infty} \,_2\phi_1(i\lambda q,q^2;i\lambda q^2;q,-i\lambda)}}\, .
\end{equation}
Expressing these basic hypergeometric functions by their explicit sums, we find that many factors in the coefficients cancel:
\begin{align}
G(t,q)&=\cfrac{1}{1-\dfrac{\lambda^2(1-q)(1-q^2)\sum\limits_{k=0}^{\infty}\dfrac{(i\lambda q;q)_k(q^3;q)_k}{(i\lambda q^3;q)_k(q;q)_k}(-i\lambda)^k}{(1-i\lambda q^2)(1+\lambda^2)\sum\limits_{k=0}^{\infty}\dfrac{(i\lambda q;q)_k(q^2;q)_k}{(i\lambda q^2;q)_k(q;q)_k}(-i\lambda)^k }}\newline\\
&=\cfrac{1}{1-\dfrac{(\lambda^2)(1-q)\sum\limits_{k=0}^{\infty}\dfrac{(1-q^{k+1})(1-q^{k+2})}{(1-i\lambda q^{k+1})(1-i\lambda q^{k+2})}(-i\lambda)^k}{(1+\lambda^2)\sum\limits_{k=0}^{\infty}\dfrac{(1-q^{k+1})}{(1-i\lambda q^{k+1})}(-i\lambda)^k}}\, .
\label{simple1}
\end{align}
We next aim to simplify the terms in the sums on the right hand side of \eqref{simple1}. For this we let
\begin{equation}
N=\frac{(1-q^{k+1})(1-q^{k+2})}{(1-i\lambda q^{k+1})(1-i\lambda q^{k+2})}(-i\lambda)^k
\label{N}
\end{equation}
and 
\begin{equation}
D=\frac{(1-q^{k+1})}{(1-i\lambda q^{k+1})}(-i\lambda)^k.
\label{D}
\end{equation}
We substitute $x=-i\lambda$ and employ partial fraction expansion with respect to $q^k$. Shifting summation indices and combining fractions, we find
\begin{multline}
N=(-1)\dfrac{x^4\sum\limits_{k=0}^{\infty}\dfrac{x^k}{(1+x q^{k+1})}-2 x^2\sum\limits_{k=0}^{\infty}\dfrac{x^k}{(1+x q^{k+1})}}{x^4 (q-1)(x-1)}\\
-\dfrac{\sum\limits_{k=0}^{\infty}\dfrac{x^k}{(1+xq^{k+1})}+x^2 q+1}{x^4 (q-1)(x-1)}
\label{p1}
\end{multline}
and
\begin{equation}
D=\dfrac{x^2\sum\limits_{k=0}^{\infty}\dfrac{x^k}{(1+x q^{k})}-\sum\limits_{k=0}^{\infty}\dfrac{x^k}{(1+x q^{k})}+1}{x^2 (x-1)}.
\label{p2}
\end{equation}
Substituting \eqref{p1} and \eqref{p2} into \eqref{simple1} and simplifying, we get the final expression \eqref{halfplanefortangent}.
\end{proof}
Next, we extract the coefficient of $t^{2N}$ of $G(t,q)$ given in \eqref{halfplanefortangent}. To start, we need an identity which can be obtained from counting rectangles on the square lattice in two different ways, taking ideas from \cite{thomas}.
\begin{lemma}
\label{rectlemma}
\begin{equation}
\label{rectangles}
\sum_{n=0}^\infty\frac{x^n}{1-yq^n}=\sum_{n=0}^\infty\frac{x^ny^nq^{n^2}(1-xyq^{2n})}{(1-xq^n)(1-yq^n)}\;.
\end{equation}
\end{lemma}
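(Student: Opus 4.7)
The strategy is to interpret both sides of the identity as two different enumerations of the same double sum $\sum_{i,j\geq 0} x^i y^j q^{ij}$, which is the generating function for rectangles of width $i$ and height $j$ weighted by $x^iy^j$ and by $q$ raised to the area $ij$. This is exactly the ``counting rectangles on the square lattice in two different ways'' alluded to in the lemma's introduction.

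For the left-hand side I would simply expand the inner geometric series $\frac{1}{1-yq^n}=\sum_{m\geq 0}y^mq^{nm}$, so that the summand becomes $x^ny^mq^{nm}$; summing over $n,m\geq 0$ immediately reproduces the target double sum with $i=n$, $j=m$.

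For the right-hand side I would partition rectangles according to $n=\min(i,j)$, splitting the set of pairs with $\min(i,j)=n$ into two disjoint families: $(i,j)=(n,n+k)$ for $k\geq 0$, and $(i,j)=(n+l,n)$ for $l\geq 1$, where the strict inequality in the second family avoids double-counting the square $(n,n)$. The total weight of these rectangles factors as
\[
x^ny^nq^{n^2}\left(\sum_{k\geq 0}(yq^n)^k+\sum_{l\geq 1}(xq^n)^l\right)=x^ny^nq^{n^2}\left(\frac{1}{1-yq^n}+\frac{xq^n}{1-xq^n}\right),
\]
and combining the two fractions over the common denominator $(1-xq^n)(1-yq^n)$ collapses the numerator to $1-xyq^{2n}$, which is precisely the $n$-th summand on the right-hand side. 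Summing over $n\geq 0$ then completes the proof. The only point requiring attention is to verify that the min-decomposition partitions $\mathbb{N}_0^2$ disjointly, which is immediate from the asymmetry between $k\geq 0$ and $l\geq 1$; beyond that the argument reduces to elementary geometric-series manipulation and I anticipate no real obstacle.
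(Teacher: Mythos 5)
Your proof is correct and follows exactly the paper's approach: both sides are interpreted as enumerations of the rectangle generating function $\sum_{i,j\geq 0}x^iy^jq^{ij}$, with the left side obtained by summing over one dimension and the right side by partitioning according to the minimal side length $n=\min(i,j)$. You have merely made explicit the geometric-series computation that the paper leaves implicit.
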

\begin{proof}
We consider the generating function of rectangles (including those of height or width zero) on the square lattice, counted with respect to height, width, and area, given by
\begin{equation}
R(x,y,q)=\sum_{n,m=0}^\infty x^ny^mq^{nm}\;.
\end{equation}
Summing over $m$ gives the left hand side of identity \eqref{rectangles}. If we instead sum over rectangles of fixed minimal width or height $N$, then this gives the right hand side of identity \eqref{rectangles}.
\end{proof}
\begin{proof}[Proof of Corollary \ref{cor36}]
The sum in \eqref{halfplanefortangent} can be identified with $R(-i\lambda,i\lambda,q)$, so that using Lemma \ref{rectlemma} we get
\begin{align}
G(t,q)=&\dfrac{(1+\lambda)^2\left[1-(1+\lambda^2) R(-i\lambda,i\lambda,q) \right]}{\lambda^2 (1-q)}\nonumber\\
=&\dfrac{(1+\lambda)^2\left[1-(1+\lambda^2) \sum\limits_{n=0}^{\infty}\dfrac{q^{n^2} \lambda^{2n} (1-\lambda^2 q^{2n}) }{(1+ \lambda^2 q^{2n})} \right]}{\lambda^2 (1-q)}\;.
\label{tangentt}
\end{align}
We remind that $G(t,q)$ is by definition an even function in $t$, and that the $t$-dependence on the right hand side is implicit in $\lambda=\lambda(t)$ by \eqref{polynomial} and \eqref{alpha1}. To extract the coefficient of $t^{2N}$, we evaluate the contour integral
\begin{equation}
[t^{2N}] G(t,q)=\frac{1}{2 \pi i}
\oint \dfrac{G(t,q)}{t^{2N+1}}dt\;.
\label{cit}
\end{equation}
Using the variable substitution
\begin{equation}
t=\frac{(1-q)\lambda}{1+\lambda^2},
\label{valuet}
\end{equation}
we get
\begin{multline}
[t^{2N}] G(t,q)=\\ \frac{1}{2 \pi i}\oint \left(\dfrac{(1+\lambda^2)^{2N} \left(1-(1+\lambda^2)\sum\limits_{n=0}^{\infty}\dfrac{q^{n^2} \lambda^{2n} (1-\lambda^2 q^{2n}) }{(1+\lambda^2 q^{2n})}\right)(1-\lambda^2)}{\lambda^{2N+2}(1-q)^{2N+1}}\right)\frac{d\lambda}{\lambda}.
\end{multline}
We thus have
\begin{equation}
[t^{2N}]G\left(t,q\right)=[\lambda^0]H_N(\lambda,q),
\end{equation}
and to extract the constant term in $\lambda$ on the right-hand side we now expand $H_N(\lambda,q)$ as a Laurent series in $\lambda$. For this, we write
\begin{equation}
[\lambda^0]H_N(\lambda,q)=[\lambda^0]\frac{T_1-T_2\sum\limits_{n=0}^{\infty}T_3}{((1-q)^{2N+1}}\;,
\label{constantterm0}
\end{equation}
where
$$
T_1=\left(\lambda+\dfrac{1}{\lambda}\right)^{2N} \left(\dfrac{1}{\lambda^2}-1\right),
$$
$$T_2=\left(\lambda+\dfrac{1}{\lambda}\right)^{2N} \left(\dfrac{1}{\lambda^2}-1\right)(1+\lambda^2),
$$
and
$$T_3=\dfrac{q^{n^2} \lambda^{2n} (1-\lambda^2 q^{2n}) }{(1+\lambda^2 q^{2n})}.$$
We find the series expansions
\begin{equation}
T_1=\sum\limits_{k=0}^{2N+1}\dfrac{(2N)!(2N-2k+1)\lambda^{2k-2N-2}}{k!(2N-k+1)!}\, ,
\label{term0}
\end{equation}
\begin{equation}
T_2=\sum\limits_{k=0}^{2N+2}\dfrac{(2N+1)!(2N-2k+2)\lambda^{2k-2N-2}}{k!(2N-k+2)!}
\label{term1}
\end{equation}
and
\begin{equation}
T_3=q^{n^2} \lambda^{2n}\left(2\left(\sum\limits_{l=0}^{\infty}(-1)^l(\lambda^2 q^{2n})^l\right)-1\right).
\label{term2}
\end{equation}
Next we substitute the expression \eqref{term0}, \eqref{term1} and \eqref{term2} in \eqref{constantterm0}, which after some simplification leads to
\begin{multline}
H_N(\lambda,q)=\dfrac{1}{(1-q)^{2N+1}}\left(\sum\limits_{k=0}^{2N+1}\dfrac{(2N)!(2N-2k+1)\lambda^{2k-2N-2}}{k!(2N-k+1)!}\right.\\
\left.-2\sum_{n=0}^{\infty}\sum_{l=0}^{\infty}\sum_{k=0}^{2N+2}(-1)^l q^{n^2+2nl} \,\dfrac{(2N+1)!(2N-2k+2)\lambda^{2k-2N-2+2n+2l}}{k!(2N-k+2)!}\right.\\
\left.+\sum_{n=0}^{\infty}\sum_{k=0}^{2N+2}q^{n^2}\dfrac{(2N+1)!(2N-2k+2)\lambda^{2k-2N-2+2n}}{k!(2N-k+2+2n)!}\right)\;.
\label{2.65}
\end{multline}
We want to extract the constant term in $\lambda$, so we combine the powers of $\lambda$ and equate them to $0$. This fixes the summation index $k$, and we get 
\begin{multline}
[t^{2N}]G\left(t,q\right)
=\dfrac{1}{(1-q)^{2N+1}}\left(-\dfrac{(2N)!}{N!(N+1)!}\right.\\
\left.
-2\sum_{n=0}^{N+1}\sum_{l=0}^{N-n+1}(-1)^l q^{n^2+2nl}\dfrac{(2N+1)!(2n+2l)}{(N-n-l+1)!(N+n+l+1)!}\right.\\ \left.+\sum_{n=0}^{N+1} q^{n^2}\dfrac{(2N+1)!(2n)}{(N-n+1)!(N+n+1)!}\right).
\end{multline}
Completing the square in the middle sum and changing summation indices leads to
\begin{multline}
[t^{2N}]G\left(t,q\right)=\dfrac{1}{(1-q)^{2N+1}}\left(-\dfrac{(2N)!}{N!(N+1)!}\right.\\
\left.-\sum_{m=0}^{N+1}q^{m^2}\dfrac{(2N+1)!(2m)}{(N-m+1)!(N+m+1)!}\left(\sum_{l=-(m+1)}^{m+1}(-1)^l q^{-l^2}\right)\right)\;,
\end{multline}
where in a final step we combined the last two sums. Shifting summation indices $m$ and $l$ gives
\begin{multline}
[t^{2N}]G\left(t,q\right)=\dfrac{1}{(1-q)^{2N+1}}\left(-\dfrac{(2N)!}{N!(N+1)!}\right.\\
\left.+\sum_{m=0}^{N}q^{m^2+2m}\dfrac{(2N+1)!(2m+2)}{(N-m)!(N+m+2)!}\left(\sum_{l=-m}^{m+2}(-1)^{l} q^{-l^2+2l}\right)\right).
\end{multline}
Performing the sum over $m$ with $l=m+2$
cancels the first term and we arrive at an expression equivalent to \eqref{tantcoeff},
\begin{multline}
[t^{2N}]G\left(t,q\right)=\dfrac{1}{(1-q)^{2N+1}}\times\\ \sum_{m=0}^{N}q^{m^2+2m}\dfrac{(2N+1)!(2m+2)}{(N-m)!(N+m+2)!}\left(\sum_{l=-m}^{m+1}(-1)^{l} q^{-l^2+2l}\right).
\end{multline}
\end{proof}
\section{$q$-secant numbers}
We begin by giving the proof of Theorem \ref{theoremnot3}. We shall prove it by solving the recurrences \eqref{recursion P1} and \eqref{recursion Q1}. This is done along the same lines as in the proof of Theorem \ref{theorem3}. 
\begin{proof}
It follows from the continued fraction expansion given in \eqref{general continued fraction2} that both the numerator $P'_w(\alpha,q)$ and denominator $Q'_w(\alpha,q)$ satisfy the recurrence relations given in \eqref{recursion P1} and \eqref{recursion Q1} respectively. As the recursions are the same for $w \geq 1$, we represent them both by $R(w)$ and solve simultaneously. It follows that 
\begin{equation}
R(w)=R(w-1)-\alpha^2(1-q^w)^2 R(w-2).
\end{equation}
Expanding the coefficient of $R(w-2)$ gives three terms which cannot be solved explicitly using standard methods because we have $w$-dependent coefficients. Also the terms $q^w$ and $q^{2w}$ cause difficulty, so we will aim to eliminate the terms containing $q^{2w}$  by suitable rescaling. For this we use the ansatz \eqref{ansatz1}. This transformation of coefficients leads to
\begin{equation}
S(w)-\frac{1}{\alpha}S(w-1)+S(w-2)=q^{w+1}S(w)+q^w S(w-2)
\label{scaled recurrence1}
\end{equation}
for $w \geq 1$. This eliminates the $q^{2w}$ from the recurrence as intended, with only $q^w$ factors on the right hand side. We see that this recurrence is very similar to \eqref{scaled recurrence}.
The left hand side of \eqref{scaled recurrence1} is a linear homogeneous recurrence relation with the same characteristic polynomial \eqref{polynomial} as above, however the right hand side is slightly different, with a prefactor of $q^w$ in front of $S(w-2)$ instead of a prefactor $q^{w+1}$. We thus use the same ansatz \eqref{ansatz2} to solve the recurrence. Following a calculation identical to the one for $q$-tangent numbers, we find for $k>0$
\begin{equation}
c_k=\frac{(\lambda^2 q^{2k}+q)qc_{k-1}}{P(\lambda q^k)}.
\label{ck1}
\end{equation}
Now substituting the value of $P(\lambda q^k)$ in \eqref{ck1} and iterating it, we get
\begin{equation}
c_k=\frac{(-\lambda^2q;q^2)_k\, q^{2k}}{(q;q)_k(\lambda^2 q;q)_k}\;.
\label{Hypergeometric ck1}
\end{equation}
The full solution to the recurrence equation \eqref{scaled recurrence1} is a linear combination of the ansatz over both the values of $\lambda$. Here $P(\lambda)=0$ and also $P(\bar{\lambda})=0$ (where $\bar{\lambda}=\frac{1}{\lambda}$). We can write the general solution for $S(w)$ as
\begin{equation}
S(w)=A\lambda^w\sum_{k=0}^{\infty}c_k(\lambda,q) q^{kw}+B\bar{\lambda}^w\sum_{k=0}^{\infty}c_k\left(\bar{\lambda},q\right)q^{kw}.
\end{equation}
We define 
\begin{align*}
\psi(\lambda,x)=&
\sum_{k=0}^{\infty}\frac{(-\lambda^2q;q^2)_k\, x^{k}}{(q;q)_k(\lambda^2 q;q)_k}
=\sum_{k=0}^{\infty}\frac{(i\lambda \sqrt{q};q)_k(-i\lambda \sqrt{q};q)_k x^k}{(\lambda^2q;q)_k(q;q)_k}\\
=&
\,_2 \phi_1(i\lambda \sqrt{q},-i\lambda \sqrt{q};\lambda^2 q;q,x)
\end{align*}
where $_2 \phi_1$ is a basic hypergeometric function. The general solution can be expressed as follows
\begin{equation}
S(w)=A\lambda^n\psi(\lambda,q^{w+2})+B\bar{\lambda}^w\psi\left(\bar{\lambda},q^{w+2}\right).
\label{linear general solution1}
\end{equation}
Using the initial conditions, we can solve for $A$ and $B$. First we solve it for $P'_w$ with the initial conditions as
$$S(-1)=0\qquad	S(0)=\frac{1}{1-q}\;.$$
Substituting these initial conditions into equation \eqref{linear general solution1} and solving it for $A$ and $B$, we have 
$$A=\frac{-\lambda^2\psi\left(\bar{\lambda},q\right)}{(1-q)\left(\psi(\lambda,q)\psi\left(\bar{\lambda},q^2\right)-\lambda^2\psi\left(\bar{\lambda},q\right)\psi(\lambda,q^2)\right)}$$
and
$$B=\frac{\psi(\lambda,q)}{(1-q)\left(\psi(\lambda,q)\psi\left(\bar{\lambda},q^2\right)-\lambda^2\psi\left(\bar{\lambda},q\right)\psi(\lambda,q^2)\right)}.$$
Similarly, we solve for $Q'_w$ using the initial conditions
$$S(-1)=\alpha\qquad\text{and}\qquad	S(0)=\frac{1}{1-q}.$$
We obtain
$$A=\frac{(\alpha)(\lambda)(1-q)\psi\left(\bar{\lambda},q^2\right)-\lambda^2\psi\left(\bar{\lambda},q\right)}{(1-q)\left(\psi(\lambda,q)\psi\left(\bar{\lambda},q^2\right)-\lambda^2\psi\left(\bar{\lambda},q\right)\psi(\lambda,q^2)\right)}$$
and
$$B=\frac{\psi(\lambda,q)-(\alpha)(\lambda)(1-q)\psi(\lambda,q^2)}{(1-q)\left(\psi(\lambda,q)\psi\left(\bar{\lambda},q^2\right)-\lambda^2\psi\left(\bar{\lambda},q\right)\psi(\lambda,q^2)\right)}.$$
Substituting the full solution for $P'_w(\alpha,q)$ and $Q'_w(\alpha,q)$ in \eqref{rational function} we have the expression in \eqref{full solution1}.
\end{proof}
By taking the limit of infinite $w$ in the generating function $G'_w$, we derive an expression for the generating function of $q$-secant numbers. 
\begin{proof}[Proof of Corollary \ref{cors1}]
Consider the right hand side of \eqref{full solution1}. As above, we choose $\lambda$ to be the smaller root of the characteristic polynomial \eqref{polynomial}. For $w \rightarrow \infty$ we have 
\begin{multline}
\psi(\lambda,q^{w+2})=\,_2\phi_1(i\lambda\sqrt{q},-i\lambda\sqrt{q};\lambda^2q;q,q^{w+2})\\ \rightarrow \,_2\phi_1(i\lambda\sqrt{q},-i\lambda\sqrt{q};\lambda^2q;q,0)=1\end{multline}
and 
$$ |\lambda^w| \rightarrow 0\;.$$
This implies
\begin{equation}
G'(t,q)=\cfrac{1}{1-\dfrac{\lambda^2(1-q)\psi(\lambda,q^{2})}{(1+\lambda^2)\psi(\lambda,q)}}.
\label{HP1}
\end{equation}
Using Heine's transformation formula given in \eqref{Hein}, we transform the basic hypergeometric functions given in \eqref{HP1} as follows
\begin{align}
\psi(\lambda,q)
&=\frac{(-i\lambda\sqrt{q};q)_\infty(i\lambda q^{3/2};q)_\infty}{(\lambda^2q;q)_\infty(q;q)_\infty} \,_2\phi_1(i\lambda \sqrt{q},q;i\lambda q^{3/2};q,-i\lambda\sqrt{q})
\label{psiq}
\end{align}
and
\begin{align}
\psi(\lambda,q^2)
&=\frac{(-i\lambda\sqrt{q};q)_\infty(i\lambda  q^{5/2};q)_\infty}{(\lambda^2q;q)_\infty(q^2;q)_\infty} \,_2\phi_1(i\lambda \sqrt{q},q^2;i\lambda q^{5/2};q,-i\lambda\sqrt{q}).
\label{psiq2}
\end{align}
Substituting the transformations \eqref{psiq} and \eqref{psiq2} into the limit \eqref{HP1} yields
\begin{multline}
G'(t,q)=\\\cfrac{1}{1-\dfrac{\lambda^2(1-q)\dfrac{(-i\lambda\sqrt{q};q)_\infty(i\lambda q^{5/2};q)_\infty}{(\lambda^2q;q)_\infty (q^2;q)_\infty} \,_2\phi_1(i\lambda\sqrt{q},q^2;i\lambda q^{5/2};q,-i\lambda\sqrt{q})}{(1+\lambda^2)\dfrac{(-i\lambda\sqrt{q};q)_\infty(i\lambda q^{3/2};q)_\infty}{(\lambda^2q;q)_\infty(q;q)_\infty} \,_2\phi_1(i\lambda \sqrt{q},q;i\lambda q^{3/2};q,-i\lambda\sqrt{q})}}\,.
\end{multline}
Expressing these basic hypergeometric functions by their explicit sums, we find
\begin{align}
G'(t,q)&=\cfrac{1}{1-\dfrac{\lambda^2(1-q)^2\sum\limits_{k=0}^{\infty}\dfrac{(i\lambda\sqrt{q};q)_k(q^2;q)_k}{(i\lambda q^{5/2};q)_k(q;q)_k}(-i\lambda\sqrt{q})^k}{(1+\lambda^2)(1-i\lambda
q^{3/2})\sum\limits_{k=0}^{\infty}\dfrac{(i\lambda \sqrt{q};q)_k(q;q)_k}{(i\lambda q^{3/2};q)_k(q;q)_k}(-i\lambda\sqrt{q})^k }}\newline \\
&=\cfrac{1}{1-\dfrac{\lambda^2(1-q)\sum\limits_{k=0}^{\infty}\dfrac{(1-q^{k+1})}{(1-i\lambda q^{k+1/2})(1-i\lambda q^{k+3/2})}(-i\lambda\sqrt{q})^k}{(1+\lambda^2)\sum\limits_{k=0}^{\infty}\dfrac{(-i\lambda\sqrt{q})^k}{(1-i\lambda q^{k+1/2})} }}\;.
\label{simple2}
\end{align}
To simplify further we consider the expression in \eqref{simple2}. We aim to simplify the terms in the sums on the right hand side of \eqref{simple2}. For this we let
\begin{equation}
N=\sum\limits_{k=0}^{\infty}\dfrac{(1-q^{k+1})}{(1-i\lambda q^{k+1/2})(1-i\lambda q^{k+3/2})}(-i\lambda\sqrt{q})^k
\end{equation}
and 
\begin{equation}
D=\dfrac{(-i\lambda\sqrt{q})^k}{(1-i\lambda q^{k+1/2})} \;.
\end{equation}
We substitute $x=-i\lambda\sqrt{q}$ and apply partial fraction decomposition to get
\begin{equation}
N=\dfrac{(q-x^2)\sum\limits_{k=0}^{\infty}\dfrac{x^k}{1+xq^k}}{x^2(q-1)}-\frac{q}{x^2(q-1)}\;
\label{N2}
\end{equation}
and
\begin{equation}
D=\sum\limits_{k=0}^{\infty}\frac{x^k}{1+xq^k}\;.
\label{d2}
\end{equation}
Substituting \eqref{N2} and \eqref{d2} into \eqref{simple2} and simplifying, we get the final result as \eqref{halfplaneofsecant}.
\end{proof}
Next we extract the coefficient of $t^{2N}$ of $G'(t,q)$ given in \eqref{halfplaneofsecant}. 
\begin{proof}[Proof of corollary \ref{cors2}]
To prove this corollary we will again use the Lemma \ref{rectlemma}. The sum in \eqref{halfplaneofsecant} can be identified with $R(-i\lambda \sqrt{q},i\lambda \sqrt{q},q)$, so we get
\begin{align}
G'(t,q)=&(1+\lambda)^2R(-i\lambda\sqrt{q},i\lambda\sqrt{q},q)\nonumber\\
=&(1+\lambda)^2 \sum\limits_{n=0}^{\infty}\dfrac{q^{n^2+n} \lambda^{2n} (1-\lambda^2 q^{2n+1}) }{(1+\lambda^2 q^{2n+1})}.
\end{align}
We remind that $t$-dependence on the right hand side is implicit in 
$\lambda=\lambda(t)$. To extract the coefficient of $t^{2N}$, we evaluate the contour integral
\begin{equation}
[t^{2N}]G'(t,q)=\frac{1}{2 \pi i}\oint \dfrac{G'(t,q)}{t^{2N+1}}dt.
\end{equation}
Using the variable substitution from \eqref{valuet}, we have
\begin{multline}
[t^{2N}]G'(t,q)=\\\frac{1}{2 \pi i}\oint \left(\dfrac{(1+\lambda)^{2N} \left(\sum\limits_{n=0}^{\infty}\dfrac{q^{n^2+n} \lambda^{2n} (1-\lambda^2 q^{2n+1}) }{(1+\lambda^2 q^{2n+1})}\right)(1-\lambda^2)}{\lambda^{2N}(1-q)^{2N})}\right)\frac{d\lambda}{\lambda}.
\end{multline}
We thus have
\begin{equation}
[t^{2N}]G'\left(t,q\right)=[\lambda^0]H'_N(\lambda,q)
\end{equation}
and to extract the constant term in $\lambda$ on the right hand side we now expand $H'_N(\lambda,q)$ as a Laurent series in $\lambda$. For this we write
\begin{equation}
[\lambda^0]H'_N(\lambda,q)=[\lambda^0]T_1 \sum \limits_{k=0}^{\infty}T_2\;,
\label{constantterm}
\end{equation}
where 
\begin{equation*}
T_1=\dfrac{\left(\lambda+\dfrac{1}{\lambda}\right)^{2N} (1-\lambda^2)}{(1-q)^{2N}}
\end{equation*}
and
\begin{equation}
T_2=\dfrac{q^{n^2+n} \lambda^{2n} (1-\lambda^2 q^{2n+1})}{(1+\lambda^2 q^{2n+1})}.
\end{equation}
We find the series expansions as
\begin{equation}
T_1=\dfrac{\sum\limits_{k=0}^{2N+1}\dfrac{(2N)!(2N-2k+1)\lambda^{2k-2N}}{k!(2N-k+1)!}}{(1-q)^{2N}}
\label{bt1}
\end{equation}
and
\begin{equation}
T_2=q^{n^2+n} \lambda^{2n}\left(2\sum\limits_{l=0}^{\infty}(-1)^l(\lambda^2 q^{2n+1})^l-1\right).
\label{bt2}
\end{equation}
Next we substitute the expression \eqref{bt1} and \eqref{bt2} in \eqref{constantterm}, which after some simplification leads to
\begin{multline}
H'_N(\lambda,q)=\dfrac{1}{(1-q)^{2N}}\\
\left(2\sum\limits_{k=0}^{2N+1}\sum\limits_{n=0}^{\infty}\sum\limits_{l=0}^{\infty}
(-1)^lq^{n^2+n+2nl+l}\dfrac{(2N)!(2N-2k+1)\lambda^{2k-2N+2n+2l}}{k!(2N-k+1)!}\right.\\ 
\left.-\sum\limits_{k=0}^{2N+1}\sum\limits_{n=0}^{\infty}q^{n^2+n} \dfrac{(2N)!(2N-2k+1)\lambda^{2k-2N+2n}}{k!(2N-k+1)!}\right).
\label{bt}
\end{multline}
We aim to get the constant term in $\lambda$, so we combine the powers of $\lambda$ and equate them to $0$. 
This fixes the summation index $k$, and we get
\begin{multline}
[t^{2N}]G'\left(t,q\right)=\dfrac{1}{(1-q)^{2N}}\times\\ \left(2\sum\limits_{n,l=0}^{\infty}
(-1)^lq^{n^2+n+2nl+l}\dfrac{(2N)!(2n+2l+1)}{(N-n-l)!(N+n+l+1)!}\right.\\ 
\left.-\sum\limits_{n=0}^{\infty}q^{n^2+n} \dfrac{(2N)!(2n+1)}{(N-n)!(N+n+1)!}\right).
\end{multline}
Completing the square in the first sum and changing summation indices leads to 
\begin{multline}
[t^{2N}]G'\left(t,q\right)=\dfrac{1}{(1-q)^{2N}}\left(\sum\limits_{m=0}^{N}q^{m^2+m}\dfrac{(2N)!(2m+1)}{(N-m)!(N+m+1)!}\right.\\\left.\left(2\sum\limits_{l=0}^{m}
(-1)^l q^{-l^2}-1\right)\right).
\end{multline}
\end{proof}
\section{Identities}
The central results of this chapter have been given in Theorems \ref{theorem3} and \ref{theoremnot3}, which express finite continued fractions in terms of basic hypergeometric functions. For example, for $q$-tangent numbers we have
\begin{multline*}
\cfrac{1}{1-\cfrac{\alpha^2(1-q)(1-q^2)}{1-\cfrac{\alpha^2(1-q^2)(1-q^3)}{{\ddots-}{\cfrac{\alpha^2(1-q^{w-1})(1-q^{w})}{1-\alpha^2(1-q^w)(1-q^{w+1})}}}}}=\\
\cfrac{1}{1-\dfrac{\lambda^2(1-q)\left[\bar{\lambda}^w\phi(\lambda,q^{3})\phi\left(\bar{\lambda},q^{w+3}\right)-\lambda^w\phi\left(\bar{\lambda},q^{3}\right)\phi(\lambda,q^{w+3})\right]}{{(1+\lambda^2)\left[\bar{\lambda}^w\phi(\lambda,q^{2})\phi\left(\bar{\lambda},q^{w+3}\right)
-\lambda^{w+2}\phi\left(\bar{\lambda},q^{2}\right)\phi(\lambda,q^{w+3})\right]}}}
\end{multline*}
and a similar result holds for $q$-secant numbers. The point we would like to make in this section is that these results can be interpreted as giving hierarchies of identities for basic hypergeometric functions. For $w$ small, the left hand side is a relatively simple rational function in $t$ and $q$, whereas the right hand side is a weighted ratio of products of basic hypergeometric functions at specific arguments. We make the resulting identities explicit for $w=1$ in the following corollary.
\begin{corollary}
\begin{multline}
\tiny
\frac{1-q^2}{1-\nu^2}=\dfrac{\left[ \splitdfrac{\,_2\phi_1(\nu,-\nu;-\nu^2 q;q,q^3)\,_2\phi_1\left(-\bar{\nu},\bar{\nu};-\nu^2\bar{ q};q,q^4\right)}{+\nu\,_2\phi_1\left(-\bar{\nu},\bar{\nu};-\nu^2\bar{ q};q,q^3\right)\,_2\phi_1(\nu,-\nu;-\nu^2 q;q,q^4)}\right]}{\left[\splitdfrac{\,_2\phi_1(\nu,-\nu;-\nu^2 q;q,q^2)\,_2\phi_1\left(-\bar{\nu},\bar{\nu};-\nu^2\bar{ q};q,q^4\right)}{-\nu^4\,_2\phi_1\left(-\bar{\nu},\bar{\nu};-\nu^2 \bar{ q};q,q^2\right)\,_2\phi_1(\nu,-\nu;-\nu^2 q;q,q^4)}\right]}
\end{multline}
where $\nu=i \lambda$, $\bar{\nu}=\frac{1}{\nu}$ and $\bar{q}=\frac{1}{q}$ , and
\begin{multline}
\tiny
\frac{1-q}{1-\mu^2\bar{q}}=\dfrac{\left[\splitdfrac{\,_2\phi_1(\mu,-\mu;-\mu^2;q,q^2)\,_2\phi_1\left(-q\bar{\mu},q\bar{\mu};-q^2\bar{\mu}^2;q,q^3\right)}{+\left(\mu^2\bar{q}\right)\,_2\phi_1\left(-q\bar{\mu},q\bar{\mu};-q^2\bar{\mu}^2;q,q^2\right)\,_2\phi_1(\mu,-\mu;-\mu^2;q,q^3)}\right]}{\left[\splitdfrac{\,_2\phi_1(\mu,-\mu;-\mu^2;q,q)\,_2\phi_1\left(-q \bar{\mu},q\bar{\mu};-q^2\bar{\mu}^2;q,q^3\right)}{-\mu^4\bar q^2\,_2\phi_1\left(-q\bar{\mu},q\bar{\mu};-q^2\bar{\mu}^2;q,q\right)\,_2\phi_1(\mu,-\mu;-\mu^2;q,q^3)}\right]}
\end{multline}
where $\mu=i \lambda \sqrt{q}$, $\bar{\mu}=\frac{1}{\mu}$ and $\bar{q}=\frac{1}{q}$.
\end{corollary}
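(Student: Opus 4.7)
The plan is to derive these two identities by specializing $w=1$ in the two different representations of $G_1(t,q)$ and $G'_1(t,q)$: the truncated continued fractions from Proposition \ref{Prop21} and the closed-form expressions from Theorems \ref{theorem3} and \ref{theoremnot3}. Setting $w=1$ in \eqref{general continued fraction} and \eqref{general continued fraction2} collapses the continued fractions to single-level rational functions,
\begin{equation*}
G_1(t,q)=\frac{1}{1-\alpha^2(1-q)(1-q^2)}\quad\text{and}\quad G'_1(t,q)=\frac{1}{1-\alpha^2(1-q)^2},
\end{equation*}
which are the simplest nontrivial cases of the theorems.

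Next, I would substitute $w=1$ directly into the right-hand sides of \eqref{full solution} and \eqref{full solution1}, producing expressions involving $\phi(\lambda,q^j),\phi(\bar\lambda,q^j)$ for $j=2,3,4$ (and analogously $\psi(\lambda,q^j),\psi(\bar\lambda,q^j)$ for $j=1,2,3$ in the secant case). Equating the two representations of $G_1$ (resp.\ $G'_1$) eliminates the outer $1/(1-\cdot)$ wrapper and reduces the claim to an equality of rational expressions in $\lambda$ and $q$. Using the identity $\alpha=\lambda/(1+\lambda^2)$ from the characteristic polynomial \eqref{polynomial}, one finds $\alpha^2(1+\lambda^2)/\lambda^2 = 1/(1+\lambda^2)$, so that after clearing the common factor $\lambda^2(1-q)$ the identity becomes
\begin{equation*}
\frac{1-q^2}{1+\lambda^2}=\frac{\bar\lambda\,\phi(\lambda,q^3)\phi(\bar\lambda,q^4)-\lambda\,\phi(\bar\lambda,q^3)\phi(\lambda,q^4)}{\bar\lambda\,\phi(\lambda,q^2)\phi(\bar\lambda,q^4)-\lambda^3\phi(\bar\lambda,q^2)\phi(\lambda,q^4)}
\end{equation*}
in the tangent case, with an entirely analogous expression $\frac{1-q}{1+\lambda^2}=\cdots$ in the secant case.

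The last step is a change of variables to the parameters appearing in the corollary. Setting $\nu=i\lambda$ gives $\lambda^2=-\nu^2$, so $1+\lambda^2=1-\nu^2$, and the substitutions $i\lambda=\nu$, $-i\lambda=-\nu$, $\lambda^2 q=-\nu^2 q$ convert $\phi(\lambda,x)={}_2\phi_1(i\lambda,-i\lambda;\lambda^2 q;q,x)$ to ${}_2\phi_1(\nu,-\nu;-\nu^2 q;q,x)$. For $\phi(\bar\lambda,x)$, from $\bar\lambda=i\bar\nu$ one obtains $i\bar\lambda=-\bar\nu$, $-i\bar\lambda=\bar\nu$, and $\bar\lambda^2 q$ expressed in terms of $\nu$ and $q$, yielding the form ${}_2\phi_1(-\bar\nu,\bar\nu;\cdot;q,x)$ claimed in the corollary. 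The secant identity proceeds identically with the change of variable $\mu=i\lambda\sqrt{q}$, so that $\lambda^2=-\mu^2\bar q$ and $1+\lambda^2=1-\mu^2\bar q$, translating $\psi(\lambda,x)={}_2\phi_1(i\lambda\sqrt q,-i\lambda\sqrt q;\lambda^2 q;q,x)$ into the $\mu$-form.

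The main obstacle is purely bookkeeping: keeping track of signs and factors of $i$ when translating between the real parameterization $(\lambda,\bar\lambda)$ and the imaginary parameterization $(\nu,\bar\nu)$ or $(\mu,\bar\mu)$, especially in the third parameter of each ${}_2\phi_1$, and correctly absorbing the prefactors $\bar\lambda$, $\lambda$, $\lambda^3$ into coefficients of the form $\nu^k$ or $\mu^k\bar q^{\,j}$. Once the parameterization is consistently applied, the identity stated in the corollary is immediate from the two computed forms of $G_1(t,q)$ and $G'_1(t,q)$.
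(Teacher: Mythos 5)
Your proposal is correct and follows exactly the paper's route: the paper's entire proof is ``Insert $w=1$ in Theorems \ref{theorem3} and \ref{theoremnot3} and simplify,'' and you have simply written out that simplification (collapsing the $w=1$ continued fraction via Proposition \ref{Prop21}, using $\alpha=\lambda/(1+\lambda^2)$, and changing variables to $\nu=i\lambda$ and $\mu=i\lambda\sqrt{q}$). The bookkeeping you flag is indeed where all the content lies, and your intermediate identity matches what one obtains from the theorem at $w=1$.
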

\begin{proof}
Insert $w=1$ in Theorems \ref{theorem3} and \ref{theoremnot3} and simplify.
\end{proof}
To the best of our knowledge these identities are new. It would be interesting to find an alternative derivation and perhaps deeper understanding of their meaning.

\end{document}